\newtheorem{thm}{\bf Theorem}
\def\fpd#1#2{{\displaystyle\frac{\partial #1}{\partial #2}}}
\def\spd#1#2#3{{\displaystyle\frac{\partial^2 #1}
{\partial #2\partial #3}}}
\def\vf#1{\frac{\partial}{\partial #1}}
\def\clift#1{#1^{\scriptscriptstyle{\mathrm{C}}}}
\def\hlift#1{#1^{\scriptscriptstyle{\mathrm{H}}}}
\def\vlift#1{#1^{\scriptscriptstyle{\mathrm{V}}}}
\def\A{\mathcal{A}}
\def\E{\mathcal{E}}
\def\R{\mathbf{R}}
\def\onehalf{{\textstyle\frac12}}
\def\threehalf{{\textstyle\frac32}}
\def\oneq{{\textstyle\frac14}}
\def\threeq{{\textstyle\frac34}}
\font\frak=eufm10 scaled\magstep1
\def\goth#1{\hbox{{\frak #1}}}
\def\g{\goth{g}}
\def\la{\g}
\def\cinfty#1{C^{\scriptscriptstyle\infty}(#1)}
\def\vectorfields#1{\goth{X}(#1)}
\def\DV#1{\vlift{\mathrm{D}}_{#1}}
\def\Sym{\mathop{\mathrm{Sym}}}
\def\tr{\mathop{\mathrm{tr}}}
\def\Ad{\mathop{\mathrm{ad}}\nolimits}
\def\ad{\Ad}
\def\lequiv{\lbrack\!\lbrack}
\def\requiv{\rbrack\!\rbrack}
\def\lLAB{\lbrack\!\lbrack}
\def\rLAB{\rbrack\!\rbrack}
\def\LABP#1{\lLAB#1\rLAB^1}
\def\LAB#1{\lLAB#1\rLAB}
\begin{document}

\title{The inverse problem for invariant
Lagrangians on a Lie group}

\author{M.\ Crampin${}^{a}$ and T.\ Mestdag${}^{a,b}$\\[2mm]
{\small ${}^a$Department of Mathematical Physics and Astronomy, Ghent University}\\
{\small Krijgslaan 281, B-9000 Ghent, Belgium}\\[1mm]
{\small ${}^b$ Department of Mathematics, The University of Michigan}\\
{\small 530 Church Street, Ann Arbor, MI 48109, USA}}

\maketitle

{\small {\bf Abstract.} We discuss the problem of the existence of a
regular invariant Lagrangian for a given system of invariant
second-order differential equations on a Lie group $G$, using
approaches based on the Helmholtz conditions.  Although we deal with
the problem directly on $TG$, our main result relies on a reduction
of the system on $TG$ to a system on the Lie algebra of $G$.  We
conclude with some illustrative examples.}

{\small {\bf Keywords.} Lagrangian system, second-order differential
equations, inverse problem, Lie group, reduction, Euler-Poincar\'e
equations}

{\small {\bf Mathematics Subject Classification (2000).}
34A26, 37J15, 49N45,
70H03.}

\section{Introduction}

The inverse problem of the calculus of variations consists in
finding conditions for the existence of a regular Lagrangian for a
given set of second-order ordinary differential equations on a
manifold, ${\ddot q}^i=f^i(q,\dot q)$, so that the given equations
are equivalent to the Euler-Lagrange equations of the Lagrangian. In
order for a Lagrangian $L(q,\dot{q})$ to exist we must be able to
find $g_{ij}(q,\dot q)$, so-called multipliers, such that
\[
g_{ij}({\ddot q}^j-f^j) = \frac{d}{dt}\left( \fpd{L}{{\dot q}^i}
\right)-\fpd{L}{q^i}.
\]
It is shown for example in \cite{Douglas,Santilli} that the
multipliers must satisfy
\begin{eqnarray*}
&&\det(g_{ij})\neq 0,\qquad\quad g_{ji}=g_{ij},\\&&
\frac{d}{dt}(g_{ij})+\onehalf \fpd{f^k}{{\dot q}^j}g_{ik}+ \onehalf
\fpd{f^k}{{\dot q}^i}g_{kj}=0,\\ && g_{ik}\left(
\frac{d}{dt}\left(\fpd{f^k}{{\dot
q}^j}\right)-2\fpd{f^k}{q^j}-\onehalf\fpd{f^l}{{\dot
q}^j}\fpd{f^k}{{\dot q}^l} \right) = g_{jk}\left(
\frac{d}{dt}\left(\fpd{f^k}{{\dot
q}^i}\right)-2\fpd{f^k}{q^i}-\onehalf\fpd{f^l}{{\dot
q}^i}\fpd{f^k}{{\dot q}^l} \right),\\&& \fpd{g_{ij}}{{\dot
q}^k}=\fpd{g_{ik}}{{\dot q}^j};
\end{eqnarray*}
and conversely, if one can find functions satisfying these conditions
then the equations $\ddot{q}^i=f^i$ are derivable from a Lagrangian.
These conditions are generally referred to as the Helmholtz
conditions.  The solution $(g_{ij})$ is the Hessian of the sought-for
Lagrangian with respect to the velocity variables, and
$\det(g_{ij})\neq 0$ is the condition for the Lagrangian to be
regular.  We refer to the recent survey \cite{KP} and the monograph
\cite{AndThomp} for comments on the history of the problem, milestones
in the literature and accounts of the different paths that have been
followed in the past.

We will focus here on the case where the manifold is a Lie group.  An
immediate example is the one where the second-order system is the
geodesic spray of the canonical connection on the group:\ this
connection is specified in terms of left-invariant vector fields $X$
and $Y$ by $\nabla_X Y =\onehalf[X,Y]$.  The inverse problem for this
specific type of second-order system has been solved explicitly for
almost all Lie groups up to dimension 6 by Thompson and his
collaborators (see \cite{Gerard4dim,Thompson2,Thompson1} and the
references therein):\ in each case the authors were able to decide if
a Lagrangian exists or not, and to provide a Lagrangian in the
affirmative cases.

The second-order equations of the canonical connection are invariant
under left translations.  Surprisingly, if a Lagrangian exists, it is
not necessarily invariant.  The main goal of this paper is to solve a
type of inverse problem which is on the one hand broader than that
discussed in \cite{Thompson1} etc.\ in that it deals with any
invariant system of second-order ordinary differential equations on a
Lie group, but on the other hand more restricted in that the
Lagrangian, if it exists, is required to be invariant also.  That is
to say, we will deal with the following rather natural problem:\ given
an invariant second-order system on a Lie group $G$, when does there
exist a regular Lagrangian for it that is also invariant under $G$?
We call this the invariant inverse problem.  The invariant inverse
problem for the specific case of the geodesic spray of the canonical
connection has been studied in \cite{Zolt}.  In the current paper, by
contrast, we will deal with the general invariant inverse problem.

It is unfortunately not straightforward to adapt the solution of the
inverse problem by the Helmholtz conditions to the invariant inverse
problem.  Clearly, if there is an invariant Lagrangian then the
corresponding multiplier matrix (its Hessian) must itself be
invariant (in an appropriate sense).  The difficulty is this:\ one
may find a multiplier which satisfies the Helmholtz conditions and
is invariant; one is then guaranteed that there is a Lagrangian, but
not that the Lagrangian is invariant.  Roughly speaking, to obtain
the Lagrangian one has to integrate the multiplier, and invariance
may be lost as a result.  In fact extra conditions, of a
cohomological nature, must be satisfied.  The occurrence of such
cohomological conditions was discussed already nearly twenty years
ago, in a similar but more limited context, by Marmo and
Morandi~\cite{Marmo2}.  We will present a version of their result,
which amounts in fact to a small generalization of it, in
Theorem~\ref{thm1}. The conditions also appear in \cite{AndPoh},
using the rather different framework of the variational bicomplex.

It is however possible to adopt quite a different approach from these
authors, by taking advantage of invariance to carry out a reduction of
the problem, which turns out to simplify it in concept, and to make
the solution considerably more useful in applications.  We next
explain this alternative approach in a little more detail.

Because of the invariance of our problem, $G$ will be a symmetry
group of the second-order system.  It follows that the space of
interest is effectively the Lie algebra $\g$ of $G$ rather than the
whole manifold $TG$, and we can first perform a reduction.  The
dynamical vector field $\Gamma$ corresponding to the system of
differential equations, namely
\[
\Gamma=
{\dot q}^i \fpd{}{q^i} + f^i \fpd{}{{\dot q}^i}\in\vectorfields{TG},
\]
reduces to a vector field $\gamma$ on $\g$ given in terms of
Cartesian coordinates $(w^i)$ (so that the $w^i$ are the components of
$w\in\g$ with respect to some chosen basis of $\g$) by
\[
\gamma=\gamma^i\fpd{}{w^i}.
\]
On the other hand, if $L \in \cinfty{TG}$ is a
regular invariant Lagrangian then its restriction to $\g=T_eG$ is
a function (also called a Lagrangian) $l\in\cinfty{\g}$.  We will take
optimal advantage of the following observation (which is proved in
\cite{MR} for example, though we will give a different derivation
below):\ finding a solution $g(t)\in G$ of the Euler-Lagrange
equations of $L$ is equivalent to finding a solution $w(t)\in \g$ of the
so-called Euler-Poincar\'e equations
\[
\frac{d}{dt}\left( \fpd{l}{w}\right) = \ad_{w}^*\fpd{l}{w},
\]
(where $\ad^*$ is the adjoint action of $\g$ on its dual), or
equivalently if $C^k_{ij}$ are the structure constants of $\g$
corresponding to the basis used to define the coordinates,
\begin{equation}\label{EPeqI}
\frac{d}{dt}\left( \fpd{l}{w^j}\right) = C^k_{ij}\fpd{l}{w^k}w^i.
\end{equation}
To obtain the corresponding solution $g(t)$ of the Euler-Lagrange
equations we need to solve in addition the equation
$g(t)^{-1}\dot g(t) = w(t)$.

The invariant inverse problem on a Lie group $G$ has therefore the
following equivalent reduced version:\ if $\Gamma$ is invariant, when
does there exist a Lagrangian $l\in\cinfty{\g}$ such that its
Euler-Poincar\'e equations (\ref{EPeqI}) are equivalent to the
equations ${\dot w}^i = \gamma^i$ for the reduced vector field
$\gamma$ on $\g$?  As we will show in Theorem~\ref{thm2} below, if
such a Lagrangian $l$ exists for $\gamma$, the original vector field
$\Gamma$ will be the Euler-Lagrange field for some invariant
Lagrangian $L$.  The advantage of such an approach is that the
Lagrangian being sought is simply a function of the coordinates $w^i$
on the Lie algebra $\g=T_eG$, rather than a function of the
coordinates $(q^i,{\dot q}^i)$ on $TG$ satisfying invariance
conditions.  The solution to this existence problem will be given in
part by a set of reduced Helmholtz conditions for $\gamma$, involving
a multiplier matrix $(k_{ij})$ which, in the end, is the Hessian of
the function $l$ we want to find.  In addition, cohomological
conditions will again make their appearance here.  It turns out, as we will
establish in Theorem~\ref{thm4} below, that one of the functions of
the reduced Helmholtz conditions is to ensure that certain cochains
are cocycles, and determine cohomology classes in the cohomology of
$\g$.  What is not resolved by the Helmholtz conditions is whether
these cocycles can be made into coboundaries; that they can is the
additional requirement for the existence of a Lagrangian.

The two approaches, leading respectively to Theorem~\ref{thm1} and
Theorem~\ref{thm4}, involve classes in the cohomology of $\g$ which,
while differently derived, are the same.  Nevertheless there is a
subtle difference between the two forms of the inverse problem,
which it is worth pointing out.  The procedure described in
Theorem~\ref{thm1} and \cite{Marmo2} associates with a certain set
of Lagrangians a pair of cohomology classes, whose vanishing is the
condition for there to be a Lagrangian in the set which is
invariant.  The procedure described in Theorem~\ref{thm4} in effect
associates with a certain set of invariant functions a pair of
cohomology classes, whose vanishing is the condition for there to be
an invariant function in the set which is a Lagrangian.

The geometrical framework that we will use is based on a
reformulation of the Euler-Lagrange equations and of the Helmholtz
conditions in terms of a suitable adapted frame.  The requisite
background material is given in Section~2.  The solution of the
invariant inverse problem using invariant multipliers in the
Helmholtz conditions on $TG$ is presented in Theorem~\ref{thm1} in
Section~3.  The Euler-Poincar\'{e} equations are derived in
Section~4, and the reduced Helmholtz conditions in Section~5.
Section~6 is devoted to the proof of Theorem~\ref{thm4}, which is
the solution of the invariant inverse problem using the reduced
Helmholtz conditions, and is the main result of the paper. Next, we
investigate the geometric structure of the reduced space. In
Section~7 we show that Equation (\ref{EPeqI}) is a particular
example of a so-called Lagrangian system on a Lie algebroid, where
the Lie algebroid at hand is related in a natural way to the Lie
algebra $\g$ of the Lie group $G$.  We make the link between the
current set-up and Mart\'\i nez's approach \cite{Ed} to Lagrangian
systems on Lie algebroids. This will result in a
coordinate-independent reformulation of the reduced Helmholtz
conditions and of the cohomology conditions. The paper ends with
some examples and some suggestions for future work.

Although the paper focusses entirely on left-invariant Lagrangians,
it can easily be adjusted to the right-invariant case.

\section{Calculus along the tangent bundle projection}

One can find in the literature several reformulations of the
Helmholtz conditions that are independent of the choice of
coordinates on the manifold $M$:\ see for example
\cite{Crampin,OldMarmo}.  We will follow closely the one given in
\cite{Towards,MCS}, which is based on a calculus of tensor fields
along the tangent bundle projection $\tau: TM\to M$.  By a vector
field along $\tau$ we mean a section of the pullback bundle
$\tau^*TM \to TM$, and likewise for tensor fields.  A section of
$\tau^*TM \to TM$ can be interpreted as a map $X: TM \to TM$ with
the property that $\tau\circ X = \tau$, and can be expressed in
terms of local coordinates as
\[
X=X^i(q,\dot q)\fpd{}{q^i} \in\vectorfields\tau.
\]
There is a 1-1 correspondence between vector fields along $\tau$ and
vertical vector fields on $TM$. This correspondence is made explicit by the
so-called vertical lift $\vlift{X}$ of $X$, given by
\[
\vlift X= X^i\fpd{}{{\dot q}^i}.
\]
Any vector field on $M$ induces a vector field along $\tau$ in an
obvious way. If $X=X^i(q)\partial/\partial q^i$ is a vector field on $M$, its
complete lift $\clift{X}$ is the following vector field on $TM$:
\[
\clift X= X^i \fpd{}{q^i} + \fpd{X^i}{q^j}{\dot q}^j\fpd{}{{\dot q}^i}.
\]
Here are some convenient formulae for the brackets of
complete and vertical lifts:
\[
[\clift X,\clift Y] = [X,Y\clift], \qquad [\clift X,\vlift Y] =
[X,Y\vlift] \qquad\mbox{and}\qquad [\vlift X,\vlift Y] = 0.
\]
Here $X$ and $Y$  are vector fields on $M$ throughout.

The vertical and complete lifts require no additional machinery for
their definitions. However, if we have a second-order differential
equation field, or dynamical vector field, $\Gamma$ at our disposal,
say
\[
\Gamma={\dot q}^i\fpd{}{q^i}+f^i\fpd{}{{\dot q}^i}
\]
(representing the second-order equations $\ddot{q}^i=f^i$), we can use
it to define the so-called horizontal lift of a vector field along
$\tau$.  The horizontal lift $\hlift{X}$ of $X\in\vectorfields\tau$ is
\[
\hlift X= X^i \left(\fpd{}{q^i} - \Gamma^j_i \fpd{}{{\dot q}^j}\right), \qquad
\Gamma^j_i =-\onehalf \fpd{f^j}{{\dot q}^i}.
\]
Any vector field $Z$ on $TM$ can be decomposed into a horizontal and
vertical component:\
$Z=\hlift X + \vlift Y$, for  $X,Y \in \vectorfields\tau$.
In case $X$ is induced by a vector field on $M$, the three lifts are
related as follows:
\[
\hlift X =\onehalf (\clift X - [\Gamma,\vlift X]).
\]
Another useful fact, which it is easy to establish by a coordinate
calculation, is that $[\Gamma,\clift{X}]$ is always
vertical.

The Lie brackets of the dynamics $\Gamma$ with horizontal and
vertical vector fields define important objects for the calculus
along $\tau$. It can be shown that the horizontal and vertical
components of these brackets take the form
\[
[\Gamma,\vlift X] = -\hlift X + (\nabla X\vlift)
\qquad\mbox{and}\qquad [\Gamma,\hlift X]= (\nabla X\hlift) +
(\Phi(X)\vlift).
\]
The operator $\Phi$ is a type (1,1) tensor field along $\tau$ and is
called the Jacobi endomorphism.  The other operator, $\nabla$, acts as
a derivative on $\vectorfields\tau$, in the sense that for
$f\in\cinfty{TM}$ and $X\in\vectorfields\tau$, $\nabla(fX) = f\nabla X
+ \Gamma(f)X$.  It is therefore called the dynamical covariant
derivative.  Finally we will need the vertical derivative $\DV{X}$
associated with any $X\in\vectorfields\tau$.  This acts on vector
fields along $\tau$, but is completely determined by its actions on
vector fields $Y$ on $M$ and on functions $f$ on $TM$ by the formulae
$\DV{X} Y =0$ and $\DV{X} f = \vlift{X}(f)$.

In the framework of the calculus along the tangent bundle
projection the multiplier matrix is regarded as an operator
$g:\vectorfields\tau \times \vectorfields\tau\to \cinfty{TM}$, that
is as a type (0,2) tensor field along $\tau$, with local expression
$g=g_{ij}(q,\dot q) dq^i \otimes dq^j$. The actions of both the
dynamical covariant derivative and the vertical derivative can easily be
extended to (0,2) tensor fields along $\tau$:\ by definition, for
$X,Y,Z\in\vectorfields\tau$
\[
\nabla g (X,Y) = \Gamma(g(X,Y)) - g(\nabla X, Y)-g(X,\nabla Y)
\]
and
\[
\DV X g (Y,Z) = \vlift X(g(Y,Z)) - g(\DV{X}Y,Z)-g(Y,\DV{X}Z).
\]

The inverse problem can now be rephrased as the search for a
type (0,2) tensor field $g$ along $\tau$ which is non-singular and
satisfies for all
$X,Y,Z\in\vectorfields\tau$ the conditions
\begin{eqnarray*}
&&g(X,Y)=g(Y,X),\\&& \nabla g=0,\\
&&g(\Phi(X),Y)=g(X,\Phi(Y)),\\ && \DV X g (Y,Z) = \DV Y g (X,Z).
\end{eqnarray*}
These are the Helmholtz conditions in coordinate-independent form.

It will also be desirable to have a coordinate-independent version of
the Euler-Lagrange equations.  It is easy to see that the
Euler-Lagrange field $\Gamma$ of a regular Lagrangian $L$ is uniquely
determined by the fact that it is a second-order differential
equation field and satisfies
\[
\Gamma(\vlift X(L)) - \clift X(L)=0
\]
for every vector field $X$ on $M$. In particular,
if $\{X_i\}$ is a basis of vector fields on $M$ then $\{\clift
X_i, \vlift X_i\}$ is an induced basis for vector fields on $TM$, and
the  following set of equations is equivalent to the Euler-Lagrange
equations:
\[
\Gamma(\vlift X_i(L)) - \clift X_i(L)=0.
\]

We now consider the effect of a diffeomorphism of $M$ on the
Euler-Lagrange equations. Let $\varphi$ be a diffeomorphism of $M$
and $T\varphi$ the induced diffeomorphism of $TM$.  For any
$X\in\vectorfields{M}$, $T(T\varphi)(\clift{X})=\clift{(T\varphi
X)}$ and $T(T\varphi)(\vlift{X})=\vlift{(T\varphi X)}$ (these are of
course the counterparts of the bracket relations quoted earlier).
Moreover, if $\Gamma\in\vectorfields{TM}$ is a second-order
differential equation field so is $T(T\varphi)\Gamma$ (this is the
counterpart of the fact, stated earlier, that $[\Gamma,\clift{Z}]$
is always vertical).

Let $L$ be a regular Lagrangian with Euler-Lagrange field $\Gamma$.
Then $T\varphi^*L$ is a regular Lagrangian; we claim that its
Euler-Lagrange field is $T(T\varphi)^{-1}\Gamma$.  The proof
goes as follows.  For any function $f$, vector field $X$ and
diffeomorphism $\varphi$, $X(\varphi^*f)=\varphi^*((T\varphi X)(f))$.
We know that $\Gamma$ is uniquely determined by the Euler-Lagrange
equations $\Gamma(\vlift{X}(L))=\clift{X}(L)$ for all
$X\in\vectorfields{M}$.  Now
\begin{eqnarray*}
\clift{X}(T\varphi^*L)&=&
T\varphi^*\left(\clift{(T\varphi X)}(L)\right)=
T\varphi^*\left(\Gamma(\vlift{(T\varphi X)}(L))\right)\\
&=&
T(T\varphi)^{-1}\Gamma\left(T\varphi^*(\vlift{(T\varphi X)}(L)\right)=
T(T\varphi)^{-1}\Gamma\left(\vlift{X}(T\varphi^*L)\right).
\end{eqnarray*}

If $L$ is regular and $T\varphi^*L=L$ then
$T(T\varphi)\Gamma=\Gamma$. But although the Lagrangian uniquely
determines the Euler-Lagrange equations, it is not in general true
that the Euler-Lagrange equations uniquely determine the Lagrangian,
so if $T(T\varphi)\Gamma=\Gamma$ all we can conclude is that
$T\varphi^*L$ is a Lagrangian for $\Gamma$; if different from $L$ it
may be called an alternative Lagrangian.  That genuinely alternative
Lagrangians (Lagrangians not differing by a total derivative) can
exist even in the most familiar circumstances is well-known:\ the
free particle is the most obvious example, and lest that look too
suspiciously special we could mention also motion in a spherically
symmetric potential in Euclidean 3-space~\cite{Henn}.

\section{The invariant inverse problem}

For the remainder of the paper the configuration manifold $M$ will
be a connected Lie group $G$. We will use $\lambda_g$ and $\rho_g$
to denote left and right multiplication by $g\in G$.
Both maps can be extended to actions $T\lambda_g$ and $T\rho_g$ of
$G$ on $TG$.

We assume that we have a left-invariant second-order differential
equation field $\Gamma$ on $TG$:\ thus $T(T\lambda_g)\Gamma=\Gamma$
for all $g\in G$.  The question under discussion is whether $\Gamma$
admits an invariant regular Lagrangian, that is, whether there is a
function $L$ on $TG$ whose Hessian with respect to velocity
coordinates is non-singular and which satisfies $T\lambda_g^*L=L$
for all $g\in G$, such that $\Gamma$ is the Euler-Lagrange field of
$L$.  We can conclude from the analysis at the end of the last
section that the Euler-Lagrange field of an invariant regular
Lagrangian is invariant.  But if we start with an invariant
second-order differential equation field on the other hand, and it
admits a regular Lagrangian, then all we can conclude is that its
left translates are alternative Lagrangians, possibly different.

We now begin to develop the machinery we need for a deeper study of
the problem.

By left-translating a basis $\{E_i\}$ of the Lie
algebra $\g$ of $G$ we obtain a left-invariant basis $\{\hat{E}_i\}$ of
$\vectorfields G$. Similarly, $\{\tilde{E}_i\}$ will denote the
right-invariant basis of $\vectorfields{G}$ obtained via right
translation. These bases are related by
\begin{equation} \label{A}
{\hat E}_i(g)=A^j_i(g){\tilde E}_j(g),
\end{equation}
where $(A^j_i(g))$ is the matrix representation of $\ad_g$; in
particular $A^j_i(e)=\delta^j_i$ (where $e$ is the identity of $G$).
We will identify the Lie algebra with the left-invariant vector
fields:\ then $[{\hat E}_i,{\hat E}_j] = C_{ij}^k {\hat E}_k$ where
the $C^k_{ij}$ are the structure constants of $\g$, and $[{\tilde
E}_i,{\tilde E}_j] = -C_{ij}^k {\tilde E}_k$.  (This is the convention
in \cite{MR}, for example.)

In the following, a vector $v_g$ in $T_g G$ will have coordinates
$(w^i)$ with respect to $\{{\hat E}_i\}$, so that $v_g=w^i{\hat
E}_i(g)$.  Then $(w^i)$ are exactly the coordinates of the Lie algebra
element $w=T\lambda_{g^{-1}}v_g$ with respect to the basis $\{E_i\}$
of $\g$.

The following property is true for any action of a connected Lie group
on a manifold:\ a tensor field is invariant under an action if and
only if its Lie derivative by every fundamental vector field
vanishes.  When the manifold is a Lie group and the action is left
multiplication, the fundamental vector fields are the right-invariant
vector fields, for which $\{ {\tilde E}_i \}$ is a basis.  A function
$f$ on $G$ is left-invariant if and only if ${\tilde E}_i (f)=0$ for
all $i$, and a vector field $X$ on $G$ is left-invariant if and only if
$[{\tilde E}_i,X]=0$.  In particular, for the left-invariant ${\hat
E}_j$, $[{\tilde E}_i,{\hat E}_j]=0$.  In view of the bracket
relations in the two bases it follows that
\begin{equation}\label{conseq}
{\tilde E}_i(A^k_j) + A^l_jC^k_{li}=0 \qquad\mbox{and}\qquad
A^k_iA^l_jC^m_{kl}=A^m_n C^n_{ij}.
\end{equation}

The Lagrangian $L$ and the dynamical vector field $\Gamma$ both live
on the tangent manifold $TG$.  To characterize their invariance we
need to know the infinitesimal generators of the induced action
$T\lambda_g$ of $G$ on $TG$.  Given that the flow of a complete lift
is tangent to the flow of the underlying vector field, it is easy to
see that the infinitesimal generators of $T\lambda_g$ are exactly the
complete lifts $\{\clift{\tilde E}_i\}$ of the infinitesimal
generators of the action $\lambda_g$ of $G$ on $G$.  So a function
${F}\in\cinfty{TG}$ is left-invariant if and only if $\clift{\tilde
E}_i ({F})=0$, and a vector field $Z \in\vectorfields{TG}$ is
left-invariant if and only if $[\clift{{\tilde E}_i},Z]=0$. Note that
$\clift{\hat{E}_i}$ and $\vlift{\hat{E}_i}$ are invariant vector
fields, by virtue of the bracket relations for complete and vertical
lifts given earlier. The functions $w^i$ are also invariant; they are
linear fibre coordinates on $TG$, and satisfy
$\vlift{\hat{E}_j}(w^i)=\delta^i_j$.

The following observations will be important.  First, if a function
$f$ satisfies $\vlift{\hat{E}_i}(f)=0$ for all $i$ the $f$ is (the
pull-back to $TG$ of) a function on $G$.  Second, if
$\vlift{\hat{E}_i}(f)=f_i$ is a function on $G$ for all $i$ then
$f-f_iw^i$ is a function on $G$.

Recall that we interpret the Hessian of a Lagrangian $L$ as a type
(0,2) tensor field $g$ along the tangent bundle projection $\tau: TG
\to G$. If $L$ is invariant then the coefficients
$K_{ij}=\vlift{{\hat E}_i}\vlift{{\hat E}_j}(L)=g({\hat E}_i,{\hat
E}_j)$ will also be invariant functions.  Now when we use the
Helmholtz condition approach to the inverse problem, if we are
interested only in invariant Lagrangians we will certainly need to
add to the Helmholtz conditions the extra condition that the
multiplier $g$ should be invariant.  As we pointed out earlier, if
we start from an invariant second-order field, it is often possible
to find non-invariant Lagrangians. Examples of this behaviour can be
found in the papers \cite{Gerard4dim,Thompson2,Thompson1} for the
case of the canonical connection.  The reason is that in these
examples the extra condition about the invariance of the multiplier
is usually not imposed on the problem.  However, as we pointed out
before and will shortly explain in more detail, the invariance of
the multiplier, while necessary for the existence of an invariant
Lagrangian, is not sufficient.

The invariance of $g$ can be defined in a coordinate-independent way
as follows.  We first define a vector field $X$ along $\tau$ to be
left-invariant if its vertical lift $\vlift X$ is left-invariant.
Since $\{{\hat E}_i\}$ is a basis for $\vectorfields G$, it serves
also as a basis for vector fields along $\tau$.  Then a vector field
along $\tau$, $X=\Xi^i {\hat E}_i$, is invariant if $\clift{\tilde
E_j} (\Xi^i)=0$, or if its coefficients $\Xi^i\in\cinfty{TG}$ are
invariant functions.  We will say that a type (0,2) tensor field $g$
along $\tau$ is invariant if $g(X,Y)$ is an invariant function for all
invariant vector fields $X$ and $Y$ along $\tau$. It is easy to
verify that this holds if and only if the coefficients of $g$ with
respect to $\{{\hat E}_i\}$ are invariant.

We now state and prove a theorem which shows what requirements in
addition to the Helmholtz conditions and the invariance of the
multiplier are necessary and sufficient for the existence of an
invariant Lagrangian. Let us call a type (0,2) tensor field $g$
along $\tau$ which satisfies the Helmholtz conditions for an
invariant second-order differential equation field $\Gamma$ and is
invariant an invariant multiplier for $\Gamma$.

\begin{thm}\label{thm1}
An invariant multiplier for an invariant second-order differential
equation field $\Gamma$ determines a cohomology class in $H^1(\g)$ and
one in $H^2(\g)$.  The field $\Gamma$ is derivable from an invariant
Lagrangian if and only if the corresponding cohomology classes vanish.
\end{thm}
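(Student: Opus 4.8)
The plan is to fix the invariant multiplier $g$ and search for an invariant Lagrangian whose Hessian is $g$; since an invariant Lagrangian for $\Gamma$ necessarily has such a Hessian, this loses nothing. Because $g$ satisfies the Helmholtz conditions, the ordinary inverse problem already supplies \emph{some} Lagrangian $L$ with $\vlift{\hat E}_i\vlift{\hat E}_j(L)=K_{ij}$ and Euler--Lagrange field $\Gamma$. First I would pin down the available freedom: two Lagrangians for $\Gamma$ with the same Hessian differ by a $\phi$ with $\vlift{\hat E}_i\vlift{\hat E}_j(\phi)=0$, so by the second observation $\phi=\alpha_jw^j+\beta$ with $\alpha_j,\beta\in\cinfty{G}$, which must in addition be a null Lagrangian. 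A short computation using $\clift{\hat E}_k(w^j)=-C^j_{kl}w^l$ shows that $\phi$ is a null Lagrangian exactly when the $1$-form $\eta=\alpha_j\hat\theta^j$ on $G$ (with $\hat\theta^j$ the left-invariant coframe) is closed and $\beta$ is constant. Thus the admissible modifications are $L\mapsto L+\widehat\eta$ with $\eta$ a closed $1$-form, where $\widehat\eta:=\alpha_jw^j$, exact $\eta$ corresponding to total time derivatives.

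Next I would measure the failure of invariance by the anomaly $a_i:=\clift{\tilde E}_i(L)$, so that $L$ is invariant if and only if every $a_i=0$. Since $[\tilde E_i,\hat E_j]=0$, the field $\clift{\tilde E}_i$ commutes with $\vlift{\hat E}_j$ and $\clift{\hat E}_j$, and by invariance of $\Gamma$ it commutes with $\Gamma$. Hence $\vlift{\hat E}_j\vlift{\hat E}_k(a_i)=\clift{\tilde E}_i(K_{jk})=0$, so each $a_i$ is affine in the velocities; and applying $\clift{\tilde E}_i$ to the Euler--Lagrange equations $\Gamma(\vlift{\hat E}_j(L))-\clift{\hat E}_j(L)=0$ shows each $a_i$ is itself a null Lagrangian. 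By the characterisation just obtained, therefore, $a_i=\widehat{\mu_i}+Q_i$ with $\mu_i$ a closed $1$-form on $G$ and $Q_i\in\R$ a constant.

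The two classes now emerge from the identity $[\clift{\tilde E}_i,\clift{\tilde E}_j]=-C^k_{ij}\clift{\tilde E}_k$, which gives $\clift{\tilde E}_i(a_j)-\clift{\tilde E}_j(a_i)+C^k_{ij}a_k=0$. Separating the part constant in the velocities yields $C^k_{ij}Q_k=0$, so $Q\in\g^*$ is a Chevalley--Eilenberg $1$-cocycle and determines $[Q]\in H^1(\g)$; one checks $Q_i=-\clift{\tilde E}_i(E_L)$, so this is exactly the obstruction to invariance of the energy. Separating the part linear in the velocities, and using $\clift{\tilde E}_i(\widehat\eta)=\widehat{\lie{\tilde E_i}\eta}$ together with $\lie{\tilde E_i}\hat\theta^j=0$, gives $\lie{\tilde E_i}\mu_j-\lie{\tilde E_j}\mu_i+C^k_{ij}\mu_k=0$; this is the cocycle condition for $\{\mu_i\}$ viewed as a $1$-cochain on $\g$ valued in the closed $1$-forms of $G$, and on passing to left-invariant (hence $\g$-invariant) representatives it determines a class in $H^2(\g)$. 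Replacing $L$ by $L+\widehat\eta$ leaves $Q$ unchanged and shifts $\mu_i$ by $\lie{\tilde E_i}\eta$, a coboundary, so both classes depend only on $g$.

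Finally, for the equivalence: an invariant Lagrangian with Hessian $g$ exists if and only if one can choose a closed $\eta$ with $\lie{\tilde E_i}\eta=-\mu_i$ and $Q_i=0$ for all $i$. The constant part is untouched by any admissible modification, so it forces $[Q]=0$; the linear part is solvable precisely when the class of $\{\mu_i\}$ is a coboundary, that is, vanishes in $H^2(\g)$. I expect the main obstacle to be this last identification: showing that the closed-$1$-form-valued cocycle $\{\mu_i\}$ reduces to a genuine class in the finite-dimensional $H^2(\g)$ with trivial coefficients, and that solvability of $\lie{\tilde E_i}\eta=-\mu_i$ over a \emph{single} closed $\eta$ is equivalent to the vanishing of that class. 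This is in essence a van~Est--type reduction from $\g$-cohomology valued in the de Rham complex of $G$ to $\g$-cohomology with real coefficients, exploiting that $\lie{\tilde E_i}$ acts trivially on $H^\bullet_{dR}(G)$ and annihilates the left-invariant forms; the careful bookkeeping of the residual constants is where the real work lies.
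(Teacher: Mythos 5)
Your strategy is in outline the same as the paper's own proof: start from the Lagrangian $L$ supplied by the Helmholtz conditions, measure non-invariance by $a_i=\clift{\tilde{E}_i}(L)$, show that $a_i$ is affine in the velocities (from invariance of the multiplier) and a null Lagrangian (from the Euler--Lagrange equations plus the vanishing Hessian of $a_i$), and extract cocycles from $[\clift{\tilde{E}_i},\clift{\tilde{E}_j}]=-C^k_{ij}\clift{\tilde{E}_k}$. Your $Q_i$ is exactly the paper's $b_i$, with the same energy interpretation, and your analysis of the residual freedom $L\mapsto L+\widehat{\eta}$ with $\eta$ a closed $1$-form is correct and in fact more careful than the paper's, which only tracks the ambiguity in its potentials and never explicitly verifies that the classes are independent of the choice of $L$.

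The genuine gap is the $H^2(\g)$ class, and it sits exactly where you located ``the real work'': you never produce a real-valued $2$-cocycle, never verify a cocycle identity for it, and never prove that solvability of $\lie{\tilde{E}_i}\eta=-\mu_i$ by a single closed $\eta$ is equivalent to the vanishing of a class in $H^2(\g)$; the ``van Est-type reduction'' is named, not performed (and ``passing to left-invariant representatives'' is not a well-defined operation on a general non-compact group). The paper's resolution of precisely this point is short and elementary. Write $\mu_i=df_i$, i.e.\ $a_{ij}=\hat{E}_j(f_i)$, for functions $f_i$ on $G$ --- closed $\Rightarrow$ exact is invoked tacitly here, and again at the very end, so your worry about non-exact closed forms is a real issue, but it afflicts the paper equally and is not resolved by deferring it to unspecified machinery. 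Then the bracket identity you already use, applied to $\clift{\tilde{E}_i}(L)=w^j\hat{E}_j(f_i)+b_i$, gives
\[
0=w^k\hat{E}_k\bigl(\tilde{E}_i(f_j)-\tilde{E}_j(f_i)+C^l_{ij}f_l\bigr)+C^k_{ij}b_k,
\]
so that $\alpha_{ij}:=\tilde{E}_i(f_j)-\tilde{E}_j(f_i)+C^l_{ij}f_l$ is \emph{constant} --- your ``residual constants'' appear automatically, with no extra machinery --- and $C^k_{ij}b_k=0$. Applying $\tilde{E}_k$ once more and taking the cyclic sum gives the $2$-cocycle identity for $\alpha$, and shifting the $f_i$ by additive constants shifts $\alpha$ by exactly a coboundary, so a class in $H^2(\g)$ is well defined. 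Conversely, when $[\alpha]=0$ and $b=0$, one normalises the constants so that $\tilde{E}_i(f_j)-\tilde{E}_j(f_i)+C^l_{ij}f_l=0$, integrates once more to get $f_i=\tilde{E}_i(f)$, and checks that $L-w^j\hat{E}_j(f)=L-\dot{f}$ is invariant with the same Euler--Lagrange field and Hessian. Your solvability criterion for closed $\eta$ is indeed equivalent to $[\alpha]=0$, but establishing that equivalence is exactly this potentials-and-constants computation; the step your proposal defers is the substance of the theorem, not bookkeeping.
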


\begin{proof}
Suppose that $g$ is an invariant multiplier. We set $K_{ij}=g({\hat
E}_i,{\hat E}_j)$.  By the very fact that we have a solution of the
Helmholtz conditions we know that there is a regular Lagrangian for
$\Gamma$, say $L$, such that $K_{ij}=\vlift{{\hat E}_i}\vlift{{\hat
E}_j}(L)$. Now $L$ need not be invariant; but from the invariance of
the $K_{ij}$ we have
\[
0=\clift{\tilde{E}_k}(K_{ij})=
\vlift{\hat{E}_i}\vlift{\hat{E}_j}(\clift{\tilde{E}_k}(L))=0,
\]
whence $\clift{\tilde{E}_k}(L)=a_{kl}w^l+b_k$ for certain functions
$a_{kl}$ and $b_k$ on $G$. Since $L$ is known to be a Lagrangian and
$\Gamma$ is invariant,
\begin{eqnarray*}
0&=&\clift{\tilde{E}_i}\left(\Gamma(\vlift{\hat{E}_j}(L))-
\clift{\hat{E}_j}(L)\right)
=\Gamma(a_{ij})-\clift{\hat{E}_j}(a_{ik}w^k+b_i)\\
&=&w^k\left(\hat{E}_k(a_{ij})-\hat{E}_j(a_{ik})-a_{il}C^l_{jk}\right)
-\hat{E}_j(b_i).
\end{eqnarray*}
We can set to zero the coefficient of $w^k$ and the remaining term
separately (both are functions on $G$).  From the second we see that
$b_i$ is constant.  From the first,
\[
\hat{E}_k(a_{ij})-\hat{E}_j(a_{ik})-a_{il}C^l_{jk}=0.
\]
Let $\vartheta^i$ be the 1-forms on $G$ dual to the $\hat{E}_i$ (so
that $\vartheta=\vartheta^iE_i$ is the Maurer-Cartan form, not that it
matters); then for each $i$ the 1-form $a_{ij}\vartheta^j$ is closed,
from which it follows that $a_{ij}=\hat{E}_j(f_i)$ for
some functions $f_i$ on $G$.  We have
\begin{equation}\label{ejfi}
\clift{\tilde{E}_i}(L)=w^j\hat{E}_j(f_i)+b_i;
\end{equation}
note that this is of the form total derivative plus constant.
Next,
\[
0=\clift{\tilde{E}_i}\clift{\tilde{E}_j}(L)
-\clift{\tilde{E}_j}\clift{\tilde{E}_i}(L)+C_{ij}^k\clift{\tilde{E}_k}(L)
=w^k\hat{E}_k\left(\tilde{E}_i(f_j)-\tilde{E}_j(f_i)
+C_{ij}^lf_l\right)+C_{ij}^kb_k,
\]
from which it follows that
\begin{equation}\label{alpha}
\tilde{E}_i(f_j)-\tilde{E}_j(f_i)+C_{ij}^lf_l=\alpha_{ij}
\end{equation}
is constant, and $C_{ij}^kb_k=0$.
Now we can regard the $b_i$ as the coefficients, with respect to the
basis of $\g^*$ dual to the basis of $\g$ with which we are working,
of a linear map $b:\g\to\R$, so that $b(\xi)=b_i\xi^i$.
Similarly, the $\alpha_{ij}$ are the coefficients of an alternating
bilinear map $\alpha:\g\times\g\to\R$, so that
$\alpha(\xi,\eta)=\alpha_{ij}\xi^i\eta^j$.  We now show that, viewed from
the perspective of the cohomology of $\g$ with values in $\R$, $b$
and $\alpha$ are cocycles; that is, they satisfy the cocycle conditions
\[
b(\{\xi,\eta\})=0 \qquad\mbox{and}\qquad
\alpha(\xi,\{\eta,\zeta\})+\mu(\eta,\{\zeta,\xi\})+\mu(\zeta,\{\xi,\eta\})=0
\]
($\{\cdot,\cdot\}$ is the Lie algebra bracket); or in terms of the
structure constants,
\[
b_k C^k_{ij}=0\qquad\mbox{and}\qquad
\alpha_{il}C^l_{jk}+\alpha_{jl}C^l_{ki}+\alpha_{kl}C^l_{ij}=0.
\]
Indeed, we have just seen that $b_kC_{ij}^k=0$.  Operating with
$\tilde{E}_k$ again on Equation~(\ref{alpha}) and taking the cyclic
sum we see that $\alpha_{ij}$ is a cocycle too.  Moreover, $f_i$ is
determined only up to the addition of a constant; and the addition of
a constant leaves $b$ unchanged and changes $\alpha$ by a coboundary.

If $\alpha_{ij}$ and $b_i$ are both cohomologous to zero, then $b_i=0$,
and by choice of additive constants we can assume that
$\tilde{E}_i(f_j)-\tilde{E}_j(f_i)+C_{ij}^lf_l=0$. But then
$f_i=\tilde{E}_i(f)$ for some function $f$ on $G$. But then
$L-w^j\hat{E}_j(f)=L-\dot{f}$ is invariant, and of course has
$\Gamma$ as its Euler-Lagrange field and has the same Hessian as $L$.
\end{proof}

In \cite{Marmo2} the authors restrict their attention to Lagrangians
satisfying just $\clift{\tilde{E}_i}(L)=w^j\hat{E}_j(f_i)$, that is,
to Lagrangians which change only by addition of a total derivative
under the action of $G$; they call such Lagrangians quasi-invariant,
and appeal to physics to justify this choice.  From a purely
mathematical point of view such a restriction is unnecessary, and
the more general situation is easily analysed, as we have seen.  One
possible interpretation of the significance of the element of
$H^1(\g)$ is this:\ it is not difficult to see that
$b_i=-\clift{\tilde{E}_i}(\E)$, where $\E$ is the energy of $L$; so
$b_i=0$ is the condition for the energy to be invariant (even though
$L$ itself might not be).  We will have more to say about the
significance of $b_i$ later.

\section{The Euler-Poincar\'e equations} \label{SectionAd}

We now turn to the reduction of $\Gamma$ to the Lie algebra $\g$.

Left-invariant functions on $TG$ are in 1-1
correspondence with functions on the Lie algebra:\ on the one hand
restriction of any function on $TG$ to $T_eG$ determines a function
on $T_eG=\g$; on the other hand, any function on $\g=T_eG$ can be
extended to a left-invariant function on the whole of $TG$ by
requiring it to be constant along each orbit of the action.  From
now on we will use the following convention:\ capital letters such
as $F$ stand for left-invariant functions, vector fields, etc.\ on
$TG$; the corresponding small letters such as $f$ stand for their
restrictions to $T_eG=\g$.

A vector field $Z={\Xi}^j \clift{{\hat E}_j} + {F}^j \vlift{{\hat
E}_j} \in\vectorfields{TG}$ is left-invariant if and only if
$[\clift{{\tilde E}_i},Z]=0$, that is, if and only if
$\clift{{\tilde E}_i}({\Xi}^j)=0$ and $\clift{{\tilde
E}_i}({F}^j)=0$.  Thus $Z$ is invariant if and only if its
components ${\Xi}^j$ and ${F}^j$ are all invariant functions.  We
can therefore identify them with functions $\xi^j$ and $f^j$ on the
Lie algebra.  Note that $f^j\vlift{\hat{E}_j}|_e$ can be identified
with a vector field on $T_eG$, since it is vertical; the same is not
true for $\xi^j\clift{\hat{E}_j}|_e$, however:\ it is defined on
$T_eG$, but as a vector field it is transverse to it.

A set $\{\xi^j\}$ of $n=\dim\g$ functions on $\g$ can be interpreted
in two equivalent ways.  First, the elements of the set could be
viewed as the coefficients of a $\cinfty{\g,\g}$-map, namely the map
$\xi: w \mapsto \xi^i(w)E_i$.  A second interpretation is to view them
as the components of a vector field $\bar{\xi}$ on $\g$, where
$\bar \xi=\xi^j\partial/\partial w^j$.
This equivalence of interpretations is a manifestation of the fact
that the vector bundles $T\la \to \la$ and $\la\times\la \to \la$ are
isomorphic, so there is a 1-1 correspondence between their sections.

The two sets $\{\xi^j\}$ and $\{f^j\}$ together define a section of
the vector bundle $\g\times T\g\to\g$, or equivalently the bundle
$\g\times\g\times\g\to\g$.  We will adopt the following convention:\
an invariant vector field $Z={\Xi}^j \clift{{\hat E}_j} + {F}^j
\vlift{{\hat E}_j} \in\vectorfields{TG}$ reduces to the section
$z=(\xi,f)$ of $\g\times T\g\to\g$ where the first element $\xi=\xi^j
E_j$ is interpreted as a $\cinfty{\g,\g}$-map and the second
$f=f^j\partial/\partial w^j$ is a vector field on $\g$.  In
particular, for an invariant second-order field
\[
\Gamma = w^i\clift{{\hat E}_j} + \Gamma^j \vlift{{\hat E}_j}
\in\vectorfields{TG}
\]
the first invariance condition, $\clift{{\tilde E}_i}(w^j)=0$, is
trivially satisfied, so the only condition is $\clift{{\tilde
E}_i}(\Gamma^j)=0$.  Let $\Delta$ be the identity map in
$\cinfty{\g,\g}$; then $\Gamma$ reduces to the section
$(\Delta,\gamma)$ of $\g\times T\g\to\g$, where
\[
\gamma = \gamma^i\fpd{}{w^i}\in\vectorfields{\g}
\]
will be often called the reduced vector field on $\g$.

Let $L\in\cinfty{TG}$ be a left-invariant regular Lagrangian with
Euler-Lagrange field $\Gamma$.  We have shown in
Section~2 that this second-order differential equation field can be
characterized by the equations
\begin{equation} \label{Lagreq}
\Gamma(\vlift{\hat E}_i (L)) - \clift{\hat E}_i (L)=0.
\end{equation}
We have also shown that if $L$ is left-invariant then so also is
$\Gamma$.  We now compute its reduced vector field $\gamma$ on $\g$.

The Euler-Lagrange
equations (\ref{Lagreq}) are of the form
\[
w^k\clift{\hat E}_k\vlift{\hat E}_i(L) + \Gamma^k\vlift{\hat
E}_k\vlift{\hat E}_i(L) - \clift{\hat E}_i (L)=0.
\]
With the help of (\ref{conseq}), the relations between the complete
and vertical lifts of elements in the two bases is given by
\begin{equation} \label{clift}
\clift{\hat E}_i = A^j_i \clift{\tilde E}_j +
w^kC^j_{ki}\vlift{\hat E}_j \qquad \mbox{and}\qquad \vlift{\hat
E}_i=A^j_i\vlift{\tilde E}_j.
\end{equation}
As a consequence the first term in the Euler-Lagrange equations vanishes:
\[
w^k\clift{\hat E}_k\vlift{\hat E}_i(L)= w^k A^j_k \clift{\tilde E}_j
\vlift{\hat E}_i(L) = w^k A^j_k \vlift{\hat E}_i\clift{\tilde E}_j(L)
+ w^k A^j_k [\clift{\tilde E}_j, \vlift{\hat E}_i ](L) =0.
\]
On the other hand, for the last term we get
\[
\clift{\hat E}_i (L) = w^kC^j_{ki}\vlift{\hat E}_j(L).
\]
The Euler-Lagrange equations, adapted to the frame $\{{\hat E}_i\}$,
are therefore
\[
\Gamma^k\vlift{\hat E}_k\vlift{\hat E}_i (L) = w^kC^j_{ki}\vlift{\hat
E}_j(L).
\]
Notice that when $L$ is globally defined and smooth, and regular,
$\Gamma$ must vanish when $w^k=0$, that is, on the zero section of
$TG$.  So a necessary condition for an invariant second-order
differential equation $\Gamma$ to be derivable from a global regular
invariant Lagrangian (or even one smooth and regular in a
neighbourhood of the zero section) is that $\Gamma$ should vanish on
the zero section. In fact this is just the requirement that $b_i=0$
in Theorem~\ref{thm1}, since
\begin{equation}\label{bi}
b_i=\clift{\tilde{E}_i}(L)|_{w^k=0}=\Gamma_{w^k=0}(\vlift{\tilde{E}_i}(L)).
\end{equation}

Let $l\in\cinfty{\g}$ be the restriction of the left-invariant
Lagrangian $L\in \cinfty{TG}$ to the Lie algebra.  Then the
restriction of $\vlift{\hat{E}_k}(L) $ to $\g$ is $\partial l/\partial
w^k$, and so on.  The defining relation for the reduced vector field
$\gamma\in\vectorfields{\g}$ of $\Gamma$ is
therefore
\begin{equation}\label{EPeq}
\gamma \Big(\fpd{l}{w^l}\Big) = C^j_{ml}w^m\fpd{l}{w^j}.
\end{equation}
These are the so-called Euler-Poincar\'e equations \cite{MR}.

Evidently if $l$ is globally defined, smooth and regular on $\g$
then $\gamma$ must vanish at the origin (this is the counterpart of
the property of $\Gamma$ noted above).  So for a vector field
$\gamma$ on $\g$ to be derivable via the Euler-Poincar\'{e}
equations from a smooth and regular (reduced) Lagrangian it is
necessary that $\gamma(0)=0$.  We will come back to this point
later.

The Euler-Poincar\'e equations should be interpreted as differential
equations with solution $w(t)$ in the Lie algebra.  We have chosen the
coordinates $(w^i)$ in such a way that they are not only the
coordinates for $w=w^i E_i$ in $\g$, but also the fibre coordinates of
any translate $v_g =T\lambda_g w \in T_gG$.  To find the solution
$(g(t),{\dot g}(t))\in TG$ of the Euler-Lagrange equations that
corresponds to $w(t)$, one simply needs to integrate the equation
$g^{-1}(t)\dot g(t) =w(t)$.

So far as the inverse problem is concerned, we can use the foregoing
analysis to reduce the problem to one on $\g$, as set out in the
following theorem.

\begin{thm}\label{thm2}
Let $\Gamma$ be an invariant second-order differential equation
field on a Lie group $G$, and $\gamma$ the corresponding reduced
vector field on $\g$.  Then $\Gamma$ admits a regular invariant
Lagrangian $L$ on $TG$ if and only if $\gamma$ admits a regular
Lagrangian on~$\g$, in the sense that there is a smooth function $l$
whose Hessian is non-singular, such that $\gamma$ is the vector
field uniquely determined by the Euler-Poincar\'{e} equations of
$l$.
\end{thm}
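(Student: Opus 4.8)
The plan is to prove the two implications separately, using throughout the correspondence from Section~4 between left-invariant functions on $TG$ and functions on $\g=T_eG$:\ an invariant $L\in\cinfty{TG}$ restricts to $l\in\cinfty{\g}$, and conversely any $l$ extends to a unique invariant $L$ by requiring constancy along the orbits of the action.

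For the forward implication, suppose $\Gamma$ is the Euler-Lagrange field of a regular invariant $L$, and put $l=L|_{\g}$. The Hessian components $K_{ij}=\vlift{\hat E}_i\vlift{\hat E}_j(L)$ are invariant functions, and the function on $\g$ to which they correspond is precisely the Hessian $\partial^2 l/\partial w^i\partial w^j$ of $l$; regularity of $L$ therefore forces $l$ to be regular. The computation already carried out in this section shows that the Euler-Lagrange equations~(\ref{Lagreq}) for $L$ reduce to the Euler-Poincar\'e equations~(\ref{EPeq}) for $l$, and because the Hessian of $l$ is non-singular these equations determine a unique reduced field, which is $\gamma$. This direction thus demands essentially no new work.

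The substance lies in the converse. Given a regular $l$ on $\g$ whose Euler-Poincar\'e field is $\gamma$, I would take $L\in\cinfty{TG}$ to be the invariant extension of $l$. Its Hessian $K_{ij}$ is invariant and corresponds on $\g$ to the Hessian of $l$; since the latter is non-singular, invariance makes $K_{ij}$ non-singular throughout $TG$, so $L$ is regular and has a well-defined Euler-Lagrange field $\Gamma_L$. As $L$ is invariant, $\Gamma_L$ is an invariant second-order field by the analysis at the end of Section~2, and hence reduces to a vector field on $\g$. The reduction computation~(\ref{EPeq}) shows this reduced field satisfies the Euler-Poincar\'e equations of $l$, so by the uniqueness just noted it coincides with $\gamma$.

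The main obstacle, and the only genuinely delicate point, is to upgrade this equality of reductions to the equality $\Gamma_L=\Gamma$ itself, rather than merely concluding that the two fields share a reduction. This is supplied by the structure of invariant second-order fields recorded in Section~4:\ any such field has the form $w^i\clift{\hat E}_i+\Gamma^j\vlift{\hat E}_j$ with invariant coefficients $\Gamma^j$, and these coefficients are exactly the data of the reduced field $\gamma=\gamma^j\partial/\partial w^j$. Hence the passage from an invariant second-order field to its reduction is injective. Since $\Gamma$ and $\Gamma_L$ are both invariant second-order fields reducing to $\gamma$, they must be equal, and $\Gamma$ is therefore the Euler-Lagrange field of the invariant Lagrangian $L$, completing the proof.
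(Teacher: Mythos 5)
Your proposal is correct, and its skeleton matches the paper's: the forward direction is the same restriction argument, and the converse starts from the same invariant extension $L$ of $l$. Where you diverge is in how the converse is closed. The paper never introduces the Euler--Lagrange field of $L$ as a separate object: it forms the invariant ``defect'' functions
$\varphi_i=\Gamma^k\vlift{\hat E}_k\vlift{\hat E}_i(L)-w^kC^j_{ki}\vlift{\hat E}_j(L)$
built from the \emph{given} $\Gamma$, observes that they restrict on $\g$ to the Euler--Poincar\'e expressions for $l$ (hence vanish there), and concludes by invariance that they vanish on all of $TG$, i.e.\ that $\Gamma$ itself satisfies the Euler--Lagrange equations of $L$; uniqueness of the Euler--Lagrange field of the regular $L$ then finishes it. You instead introduce the Euler--Lagrange field $\Gamma_L$ of $L$, show its reduction satisfies the Euler--Poincar\'e equations, identify that reduction with $\gamma$ by regularity of $l$, and then use injectivity of the reduction map on invariant second-order fields to conclude $\Gamma_L=\Gamma$. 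Both arguments rest on the same principle --- invariant data on $TG$ is determined by its restriction to $T_eG$ --- but you apply it to the coefficients of the two second-order fields, while the paper applies it to the defect functions. Your route is slightly longer, needing in addition the Section~2 fact that the Euler--Lagrange field of an invariant Lagrangian is invariant and two uniqueness statements (for the Euler--Lagrange field of regular $L$ and the Euler--Poincar\'e field of regular $l$), all of which are available in the paper; the paper's version is more economical in that it verifies the Euler--Lagrange equations for $\Gamma$ directly. Either way the proof is sound.
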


\begin{proof}
Clearly, if $L$ is a regular invariant Lagrangian for $\Gamma$, its
restriction $l$ to $\g$ is a regular Lagrangian for $\gamma$.
Conversely, suppose that $l$ is a regular Lagrangian for $\gamma$ on
$\g$, and let $L$ be the unique invariant function on $TG$ which
agrees with $l$ on $T_eG=\g$. Consider the functions
\[
\varphi_i=\Gamma^k\vlift{\hat E}_k\vlift{\hat E}_i (L) -
w^kC^j_{ki}\vlift{\hat E}_j(L),
\]
where $\Gamma=w^k\clift{\hat{E}_k}+\Gamma^k\vlift{\hat{E}_k}$. We
showed earlier that $\Gamma^k$ is invariant, and so is $w^k$. Since
$\clift{\tilde{E}_i}$ commutes with $\vlift{\hat{E}_j}$, both
$\vlift{\hat E}_j(L)$ and $\vlift{\hat E}_k\vlift{\hat E}_i (L)$ are
invariant. So $\varphi_i$ is invariant. But the restriction of
$\varphi_i$ to $\g$ vanishes, by the Euler-Poincar\'{e} equations;
so $\varphi_i$ vanishes everywhere on $TG$. But as we showed
earlier, the vanishing of $\varphi_i$ is equivalent to the
Euler-Lagrange equations for $L$. Moreover, $L$ is regular since $l$
is. Thus $L$ is a regular invariant Lagrangian and $\Gamma$ is its
Euler-Lagrange field.
\end{proof}

\section{The reduced Helmholtz conditions}

In this section we will show that in the case of an invariant
Lagrangian, not only the Euler-Lagrange equations, but also the
Helmholtz conditions can be restated as conditions at the level of
the Lie algebra.

Recall that we interpret the Hessian of a Lagrangian as a
type (0,2) tensor field $g$ along the tangent bundle projection $\tau: TG
\to G$.  Due to the invariance of the Lagrangian the
coefficients $K_{ij}=\vlift{{\hat E}_i}\vlift{{\hat E}_j}(L)=g({\hat
E}_i,{\hat E}_j)$ will also be invariant functions. In what follows we
will denote the restrictions of these functions to $\g$ by $k_{ij}$.

Let us now evaluate the Helmholtz conditions, which we have stated in a
coordinate free way in the second section, in the basis $\{{\hat E}_i\}$.
The first conditions are simply
\begin{equation}\label{H1}
\det (K_{ij})\neq 0, \qquad  K_{ij}=K_{ji}.
\end{equation}
The Jacobi endomorphism and the dynamical derivative are determined
by the horizontal structure on $TG$. Since $[\Gamma,\vlift{\hat
E}_i]= - \clift{\hat E}_i + (w^jC^k_{ji}-\vlift{\hat
E}_i(\Gamma^k))\vlift{\hat E}_k $, it is easy to see that the
horizontal lift of ${\hat E}_i$ is
\[
\hlift{\hat E}_i = \clift{\hat E}_i + \onehalf
\left(-w^jC^k_{ji}+\vlift{\hat E}_i(\Gamma^k)\right)\vlift{\hat E}_k
= \clift{\hat E}_i -\Lambda_i^k \vlift{\hat E}_k
\]
say. Now both $\clift{\tilde E}_i(w^j)=0$ and $\clift{\tilde
E}_i(\Gamma^j)=0$, so all $\hlift{\tilde E}_i$ are invariant.  From
now on
\[
\lambda^k_i =
-\onehalf\left(\fpd{\gamma^k}{w^i}-w^jC^k_{ji}\right)
\]
denotes the restriction of the invariant function $\Lambda^k_i$ to
$\g$. It is easy to see that the horizontal lift of an invariant
vector field along $\tau$ is invariant, and vice versa.

We next consider the dynamical covariant derivative $\nabla$. We have
$[\Gamma,\vlift{\hat E}_i] = - \hlift{\hat E}_i +
\vlift{(\nabla\hat{E}_i)}$.
Now both $\Gamma$ and $\vlift{\hat E}_i$ are invariant, so by the
Jacobi identity $[\Gamma,\vlift{\hat E}_i]$ must be invariant also.
Since the horizontal part of the bracket, $-\hlift{\hat E}_i$, is
invariant, $\vlift{(\nabla {\hat E}_i)}$ and therefore $(\nabla {\hat
E}_i)$ must be invariant in turn. In general, if $X=X^i {\hat
E}_i\in\vectorfields\tau$ is invariant, then
$\nabla X = X^i \nabla{\hat E}_i + \Gamma(X^i){\hat E}_i$
is also invariant.  We may summarize this result by saying that $\nabla$
itself is invariant.  Furthermore, the coefficients of $\nabla$ with
respect to the invariant basis are invariant functions, which can be
reduced to functions on $\g$. In fact we can calculate $[\Gamma,\vlift{\hat
E}_i]$ explicitly, obtaining
$[\Gamma,\vlift{\hat E}_i] = - \hlift{\hat
E}_i+\Lambda^k_i\vlift{\hat E}_k$, so that
\[
\nabla {\hat E}_i = \onehalf(w^jC_{ji}^k -\vlift{{\hat
E}}_i(\Gamma^k)){\hat E}_k = \Lambda_i^k{\hat E}_k,
\]
and the coefficients are just the functions $\Lambda^k_i$ which we
know already to be invariant.

Given that $\clift{{\tilde E}_k}(K_{ij})=0$ the Helmholtz condition
$\nabla g=0$, when evaluated on the pair $({\hat E}_i,{\hat E}_j)$,
gives
\begin{equation}\label{H2}
\Gamma^k \vlift{\hat E}_k(K_{ij}) - K_{kj}\Lambda^k_i -
K_{ik}\Lambda^k_j=0.
\end{equation}

The components of the Jacobi endomorphism with respect to the
current basis can be calculated from $[\Gamma,\hlift{\hat E}_j]$.
One finds that
\begin{eqnarray*}
\Phi ({\hat E}_j)  &=& \left(
\onehalf\Gamma^i\vlift{\hat E}_i\vlift{\hat E}_j(\Gamma^l)
+\onehalf\Gamma^i C^l_{ij}
-\oneq\vlift{\hat E}_j(\Gamma^i) \vlift{\hat E}_i(\Gamma^l)\right. \\
&&\left.\mbox{}
-\threeq C^k_{ij}w^i \vlift{\hat E}_k (\Gamma^l)
+\oneq w^iC^l_{ik}\vlift{\hat E}_j(\Gamma^k)
-\oneq w^m w^n C^k_{mj}C^l_{nk}\right) {\hat E}_l
=\Phi_j^l {\hat E}_l.
\end{eqnarray*}
Again, the coefficients $\Phi_j^l$ are invariant functions, and
restrict to functions on $\g$ given by
\[\phi_j^l  =
\onehalf\gamma^i \frac{\partial^2\gamma^l}{\partial w^i \partial w^j}
+\onehalf\gamma^i C^l_{ij}
-\oneq\fpd{\gamma^i}{w^j}\fpd{\gamma^l}{w^i}
-\threeq C^k_{ij}w^i\fpd{\gamma^l}{w^k}
+\oneq w^iC^l_{ik}\fpd{\gamma^k}{w^j
}-\oneq w^m w^n C^k_{mj}C^l_{nk}.
\]
This somewhat uncouth-looking formula can be civilized by expressing
it in terms of the quantities
\[
\psi^i_j=\onehalf\left(\fpd{\gamma^i}{w^j}+C^i_{kj}w^k\right),
\]
when it becomes
\[
\phi^l_j=\gamma(\psi^l_j)-w^kC^i_{kj}\psi^l_i+w^kC_{ki}^l\psi^i_j
-\psi^k_j\psi^l_k.
\]
Again, for any invariant $X$, $\Phi(X)$ is an invariant vector field
along $\tau$.  The Helmholtz condition involving the Jacobi
endomorphism is simply
\begin{equation} \label{H3}
K_{ij}\Phi^i_k= K_{ik}\Phi^i_j.
\end{equation}
Finally, the $\DV{}$-condition is
\begin{equation} \label{H4}
\vlift{\hat E}_l(K_{ij}) = \vlift{\hat E}_i(K_{lj}).
\end{equation}

The conditions (\ref{H1}), (\ref{H2}), (\ref{H3}) and (\ref{H4}) are
all invariant; it is therefore enough to find a solution
$k_{ij}\in\cinfty{\g}$ of the restriction of these conditions to
$\g=T_eG$, which may be called the reduced Helmholtz conditions.  The
solution of the full conditions on $TG$ can then be found by left
translating the solution on $\g$.

For any $\gamma=\gamma^i\partial/\partial w^i\in\vectorfields{\g}$, we
call a matrix $(k_{ij})$ of functions on $\g$ a multiplier matrix for
$\gamma$ if it satisfies the reduced Helmholtz conditions
\begin{eqnarray*}
&& \det (k_{ij})\neq 0, \qquad  k_{ij}=k_{ji}, \\[2mm]
&& \gamma^k\fpd{k_{ij}}{w^k} - k_{kj}\lambda^k_i - k_{ik}\lambda^k_j=0, \\[2mm]
&& k_{ij}\phi^i_k= k_{ik}\phi^i_j,\\[2mm]
&& \fpd{k_{ij}}{w^l} = \fpd{k_{lj}}{w^i}.
\end{eqnarray*}
We have shown
\begin{thm}\label{thm3}
Suppose given an invariant second-order differential equation field
$\Gamma$, with reduced vector field $\gamma$.  Then there is an
invariant multiplier matrix $(K_{ij})$ for $\Gamma$ on $TG$ if and only
there is a multiplier matrix $(k_{ij})$ for $\gamma$ on $\g$.
\end{thm}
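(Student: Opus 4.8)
The plan is to reduce the theorem to a single, essentially bookkeeping, observation by exploiting the fact---already established in the lines leading up to the statement---that in the left-invariant frame $\{\hat E_i\}$ the Helmholtz conditions read (\ref{H1})--(\ref{H4}), and that every function occurring in them (the components $K_{ij}$, the coefficients $\Lambda^k_i$ and $\Phi^i_k$, and $\Gamma^k$) is invariant. The whole proof then turns on the remark that every orbit of the lifted action $T\lambda_g$ meets the fibre $\g=T_eG$, since $T\lambda_{g^{-1}}v_g\in T_eG$ for any $v_g\in T_gG$. Consequently an invariant function on $TG$ vanishes everywhere if and only if it vanishes on $\g$, and is nowhere zero if and only if its restriction is nowhere zero; so each of the invariant equations (\ref{H1})--(\ref{H4}) holds on all of $TG$ precisely when its restriction to $\g$ does.

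First I would make precise how the vertical derivative interacts with restriction. Because $\clift{\tilde E}_k$ commutes with $\vlift{\hat E}_l$, the vertical derivative of an invariant function is again invariant and so may itself be restricted; using $\vlift{\hat E}_j(w^i)=\delta^i_j$ and the fact that the $w^i$ are the linear fibre coordinates, one checks that for an invariant $K$ with restriction $k$ one has $\vlift{\hat E}_l(K)|_{\g}=\partial k/\partial w^l$. Granting this, the restriction of (\ref{H1})--(\ref{H4}) to $\g$ is obtained simply by replacing $K_{ij}$ by $k_{ij}$, $\Lambda^k_i$ by $\lambda^k_i$, $\Phi^i_k$ by $\phi^i_k$, $\Gamma^k$ by $\gamma^k$, and each $\vlift{\hat E}_l$ by $\partial/\partial w^l$---and these are exactly the reduced Helmholtz conditions, with symmetry restricting term by term and nondegeneracy following from the orbit remark.

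With these preliminaries I would argue both directions together. Given an invariant multiplier $(K_{ij})$ on $TG$, the $K_{ij}$ restrict to functions $k_{ij}$ on $\g$, and restricting the four invariant conditions (which hold everywhere) yields the reduced Helmholtz conditions, so $(k_{ij})$ is a multiplier matrix for $\gamma$. Conversely, given a reduced multiplier $(k_{ij})$, I would extend each $k_{ij}$ to the unique invariant function $K_{ij}$ agreeing with it on $\g$; the result is symmetric and, by the orbit remark, nonsingular, while each of (\ref{H1})--(\ref{H4}) is an equation between invariant functions whose restriction to $\g$ is the corresponding reduced condition, hence holds throughout $TG$. Thus $(K_{ij})$ is an invariant multiplier for $\Gamma$.

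I do not expect the computations to be the obstacle: the reduced forms of $\Lambda$, $\Phi$ and the covariant derivative, together with equations (\ref{H2}) and (\ref{H3}), have already been derived in the text, so no fresh calculation is required. The one point needing careful statement rather than computation is the orbit-meets-fibre observation together with the restriction rule $\vlift{\hat E}_l(K)|_{\g}=\partial k/\partial w^l$, since it is precisely what licenses passing freely between ``holds on $TG$'' and ``holds on $\g$''; this is elementary once the invariance of all the ingredients is recognized, and it is where I would be most careful.
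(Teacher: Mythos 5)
Your proposal is correct and follows essentially the same route as the paper: the paper's proof of Theorem~\ref{thm3} consists precisely of writing the Helmholtz conditions in the invariant frame $\{\hat E_i\}$ as (\ref{H1})--(\ref{H4}), checking that all ingredients ($K_{ij}$, $\Lambda^k_i$, $\Phi^i_j$, $\Gamma^k$) are invariant functions, and concluding that a solution on $TG$ restricts to one on $\g$ while a solution on $\g$ extends by left translation. Your explicit statement of the orbit-meets-fibre observation and of the restriction rule $\vlift{\hat E}_l(K)|_{\g}=\partial k/\partial w^l$ merely makes precise what the paper leaves implicit (the latter is stated in Section~4 for the Lagrangian), so there is no substantive difference.
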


\section{The reduced inverse problem}

Theorem~\ref{thm2} shows that the problem of finding an invariant
regular Lagrangian for an invariant second-order differential equation
field on $TG$ can be reduced to that of finding a regular Lagrangian
for the reduced vector field on $\g$.  From Theorem~\ref{thm3} we can
infer that the existence of a multiplier matrix, that is, a solution
of the reduced Helmholtz conditions, for the reduced vector field on
$\g$ is a necessary condition for it to admit a Lagrangian.  However,
as we know, the relationship between Helmholtz conditions and
Lagrangians in the invariant inverse problem is a little more
complicated than is the case for the ordinary inverse problem.  While
the existence of a multiplier matrix on $\g$ is sufficient to
guarantee the existence of an invariant multiplier matrix on $TG$, the
existence of an invariant multiplier matrix on $TG$ is not sufficient
to guarantee the existence of an invariant Lagrangian on $TG$.  The
following theorem supplies in effect the extra conditions, working now
entirely in terms of reduced quantities on $\g$.

\begin{thm}\label{thm4}
A multiplier matrix for $\gamma\in\vectorfields{\g}$ determines a
cohomology class in $H^1(\g)$ and one in $H^2(\g)$.  The vector field
$\gamma$ is derivable from a Lagrangian if and only if the
corresponding cohomology classes vanish.
\end{thm}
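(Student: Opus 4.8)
The plan is to reconstruct the candidate reduced Lagrangian from the multiplier, to measure by how much it fails to satisfy the Euler-Poincar\'e equations, and to show that this failure is governed by two cocycles whose classes are the asserted invariants.

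First I would recover a function $l$ on $\g$ from the multiplier matrix $(k_{ij})$. The symmetry $k_{ij}=k_{ji}$ together with the last reduced Helmholtz condition $\partial k_{ij}/\partial w^l=\partial k_{lj}/\partial w^i$ makes $\partial k_{ij}/\partial w^l$ totally symmetric in its three indices; since $\g\cong\R^n$ is contractible, a double application of the Poincar\'e lemma produces $l$ with $k_{ij}=\partial^2l/\partial w^i\partial w^j$, and $l$ is regular because $\det(k_{ij})\neq0$. This $l$ is fixed only up to an affine term $c_iw^i+c_0$. Next I would introduce the Euler-Poincar\'e defect
\[
\varepsilon_i=\gamma\Big(\fpd{l}{w^i}\Big)-C^j_{mi}w^m\fpd{l}{w^j}=\gamma^k k_{ki}-C^j_{mi}w^m\fpd{l}{w^j},
\]
so that $l$ is a Lagrangian for $\gamma$ precisely when $\varepsilon_i\equiv0$. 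The key structural step is to differentiate $\varepsilon_i$ and show that its symmetrized gradient vanishes, $\partial\varepsilon_i/\partial w^j+\partial\varepsilon_j/\partial w^i=0$. This is a direct computation using the symmetry of $k$, the closure condition, the definition of $\lambda^k_i$ in terms of $\partial\gamma^k/\partial w^i$, and, crucially, the reduced $\nabla$-condition to rewrite $\gamma(k_{ji})$; the Jacobi-endomorphism condition is not needed at this stage. Because $\g$ is flat, a covector field with vanishing symmetrized gradient is affine, so $\varepsilon_i=\alpha_{ij}w^j+b_i$ with $\alpha_{ij}$ a constant skew matrix and $b_i$ a constant vector.

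I would then identify $b_i=\gamma^k(0)k_{ki}(0)$ as a $1$-cochain and $\alpha_{ij}$ as a $2$-cochain on $\g$, and establish that they are cocycles, that is $b_kC^k_{ij}=0$ and $\alpha_{il}C^l_{jk}+\alpha_{jl}C^l_{ki}+\alpha_{kl}C^l_{ij}=0$. This is the step where the remaining, so-far-unused condition $k_{ij}\phi^i_k=k_{ik}\phi^i_j$ must come into play, and I expect it to be the main obstacle: one has to feed the algebraically heavy expression for $\phi^l_j$ into this self-adjointness relation and combine it with the Jacobi identity for the $C^k_{ij}$ to extract the two cocycle identities. As a consistency check the abelian case is transparent, since there $\partial\varepsilon_i/\partial w^j$ is already manifestly skew and both cocycle conditions are vacuous. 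A cleaner but less self-contained alternative is to invoke Theorem~\ref{thm3} to pass to an invariant multiplier on $TG$ and then read off the cocycle property from the computation in the proof of Theorem~\ref{thm1}.

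Finally I would close the argument using the freedom in $l$. Replacing $l$ by $l+c_iw^i$ leaves $(k_{ij})$ and the $b_i$ unchanged and shifts $\alpha_{ij}$ by $c_pC^p_{ij}$, i.e.\ by a coboundary; hence $b$ determines a well-defined class in $H^1(\g)$ and $\alpha$ a well-defined class in $H^2(\g)$. Since $\varepsilon_i\equiv0$ can be achieved for some choice of the $c_i$ exactly when $b_i=0$ (the constant part being unaffected by the $c_i$) and $\alpha_{ij}=c_pC^p_{ij}$ for some $c$ (making $\alpha$ a coboundary), the vector field $\gamma$ is derivable from a Lagrangian with Hessian $(k_{ij})$ if and only if both cohomology classes vanish; the converse direction is immediate because the Hessian of any genuine Lagrangian for $\gamma$ is itself a multiplier matrix with vanishing defect and hence vanishing classes.
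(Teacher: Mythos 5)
Your proposal is correct and is essentially the paper's own proof: integrate the symmetry and closure conditions to obtain $l$ (unique up to an affine term), show via the $\nabla$-condition that the Euler--Poincar\'e defect has vanishing symmetrized gradient and is therefore affine in $w$ with constant skew linear part and constant part, derive the two cocycle identities from the $\Phi$-condition combined with the Jacobi identity, and use the affine freedom in $l$ to identify the coboundary ambiguity in the $H^2(\g)$ class and conclude both directions. The single step you leave unexecuted---extracting the cocycle conditions from $k_{il}\phi^l_j=k_{jl}\phi^l_i$---is carried out in the paper exactly along the lines you predict, by introducing $\chi_{ij}=\psi^l_ik_{jl}-\psi^l_jk_{il}=\mu_{ij}+C^k_{ij}\,\partial l/\partial w^k$, substituting for the $\gamma(k_{jl})$ terms via a second use of the $\nabla$-condition to obtain $\gamma(\chi_{ij})+w^k\bigl(C^l_{ki}\chi_{jl}-C^l_{kj}\chi_{il}\bigr)=0$, and then invoking the constancy of $\mu$ and the Jacobi identity so that the resulting expression is affine in $w$ with constant coefficients that must vanish separately.
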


\begin{proof}
Suppose the functions $k_{ij}$ on $\g$ satisfy the reduced Helmholtz
conditions, so that $(k_{ij})$ is a multiplier matrix.  From the last
of the Helmholtz conditions,
\[
\fpd{k_{ik}}{w^j} = \fpd{k_{ij}}{w^k},
\]
and the assumed symmetry of $k_{ij}$ in its indices, it follows that
there is a function $l$ on $\g$ such that
\[
k_{ij}=\spd{l}{w^i}{w^j};
\]
$l$ is determined up to the addition of a term linear in the $w^k$
(and the addition of a constant, but this we can ignore).  Then
\[
\vf{w^i}\left(\gamma\left(\fpd{l}{w^j}\right)-C^l_{kj}w^k\fpd{l}{w^l}\right)
=\gamma^k\fpd{k_{ij}}{w^k}+\fpd{\gamma^k}{w^i}k_{jk}-C^k_{ij}\fpd{l}{w^k}
-C^l_{kj}w^kk_{il}.
\]
Let us denote the term in brackets on the left-hand side (whose
vanishing is the Euler-Poincar\'{e} equations) by $V_j$.
Then the Helmholtz condition
\[
\gamma^k\fpd{k_{ij}}{w^k} - k_{kj}\lambda^k_i - k_{ik}\lambda^k_j=0
\]
is equivalent to
\[
\fpd{V_i}{w^j}+\fpd{V_j}{w^i}=0.
\]
It follows that
\[
\spd{V_i}{w^j}{w^k}=-\spd{V_j}{w^i}{w^k}=\spd{V_k}{w^i}{w^j}
=-\spd{V_i}{w^k}{w^j},
\]
whence
\[
\spd{V_i}{w^j}{w^k}=0.
\]
But this says that there are constants $\mu_{ij}$ and $\nu_i$, the
$\mu_{ij}$ being skew in their indices, such that
\begin{equation}\label{munu}
V_i=\gamma^k\spd{l}{w^i}{w^k}-C^l_{ki}w^k\fpd{l}{w^l}
=\mu_{ji}w^j+\nu_i.
\end{equation}
As before, we can regard the $\nu_i$ as the coefficients of a linear
map $\nu:\g\to\R$, and the $\mu_{ij}$ as the coefficients of an
alternating bilinear map $\mu:\g\times\g\to\R$.  We now show that $\nu$
and $\mu$  satisfy the cocycle conditions
$\nu_l C^l_{ij}=0$ and
$\mu_{il}C^l_{jk}+\mu_{jl}C^l_{ki}+\mu_{kl}C^l_{ij}=0$.
In fact these conditions hold by virtue of the Helmholtz conditions,
especially the condition $k_{il}\phi^l_j=k_{jl}\phi^l_i$.

Now $\mu_{ij}$ is half of the skew part of
\[
\fpd{V_j}{w^i}=
\gamma^k\fpd{k_{ij}}{w^k}+\fpd{\gamma^k}{w^i}k_{jk}-C^k_{ij}\fpd{l}{w^k}
-C^l_{kj}w^kk_{il},
\]
so that
\[
\mu_{ij}= \onehalf\left(\fpd{\gamma^l}{w^i}+C^l_{ki}w^k\right)k_{jl}
-\onehalf\left(\fpd{\gamma^l}{w^j}+C^l_{kj}w^k\right)k_{il}
-C^k_{ij}\fpd{l}{w^k}.
\]
Earlier, we set
\[
\onehalf\left(\fpd{\gamma^l}{w^i}+C^l_{ki}w^k\right)=\psi^l_i;
\]
we now put
\[
\chi_{ij}=\psi^l_ik_{jl}-\psi^l_jk_{il}=
\mu_{ij}+C_{ij}^k\fpd{l}{w^k}.
\]
We now consider
$\phi^l_ik_{jl}-\phi^l_jk_{il}$, where (as we showed earlier)
\[
\phi^l_i=\gamma(\psi^l_i)-w^kC^j_{ki}\psi^l_j+w^kC_{kj}^l\psi^j_i
-\psi^k_i\psi^l_k.
\]
We look first at the terms in $\phi^l_ik_{jl}-\phi^l_jk_{il}$ which
involve $\gamma(\psi^l_i)$:\ these are
\[
\gamma(\psi^l_i)k_{jl}-\gamma(\psi^l_j)k_{il}
=\gamma(\chi_{ij})-\psi^l_i\gamma(k_{jl})+\psi^l_j\gamma(k_{il}).
\]
We substitute for the  $\gamma(k_{il})$
terms from the appropriate Helmholtz condition, and find in the end that
\[
\phi^l_ik_{jl}-\phi^l_jk_{il}=
\gamma(\chi_{ij})+w^kC^l_{ki}\chi_{jl}-w^kC^l_{kj}\chi_{il}=0.
\]
Now since $\mu_{ij}$ is constant,
\[
\gamma(\chi_{ij})=C^l_{ij}\gamma\left(\fpd{l}{w^l}\right)
=C^l_{ij}\nu_l+
C^l_{ij}\left(\mu_{kl}+C_{kl}^m\fpd{l}{w^m}\right)w^k,
\]
so that
\begin{eqnarray*}
0&=&
\gamma(\chi_{ij})+(\chi_{il}C^l_{jk}+\chi_{jl}C^l_{ki})w^k\\
&=&C^l_{ij}\nu_l+C^l_{ij}\left(\mu_{kl}+C_{kl}^m\fpd{l}{w^m}\right)w^k
+(\chi_{il}C^l_{jk}+\chi_{jl}C^l_{ki})w^k\\
&=&C^l_{ij}\nu_l+(\mu_{kl}C^l_{ij}+\mu_{il}C^l_{jk}+\mu_{jl}C^l_{ki})w^k
+(C_{ij}^lC_{kl}^m+C_{jk}^lC_{il}^m+C_{ki}^lC_{jl}^m)w^k\fpd{l}{w^m}\\
&=&C^l_{ij}\nu_l+(\mu_{il}C^l_{jk}+\mu_{jl}C^l_{ki}+\mu_{kl}C^l_{ij})w^k.
\end{eqnarray*}
Since this expression
is affine in $w^k$ with constant coefficients, these coefficients
must vanish. Therefore both $C^l_{ij}\nu_l=0$ and
$\mu_{il}C^l_{jk}+\mu_{jl}C^l_{ki}+\mu_{kl}C^l_{ij}=0$, as required.

If we change $l$ to $l'=l+\theta_kw^k$, the corresponding change in
the cocycles is from $(\nu,\mu)$ to $(\nu',\mu')$ where
$\nu'_i=\nu_i$ and
$\mu'_{ij}=\mu_{ij}-\theta_kC^k_{ij}$, or $\nu'=\nu$
and $\mu'(\xi,\eta)=\mu(\xi,\eta)-\theta(\{\xi,\eta\})$. That is,
both components of $(\nu,\nu')$ and $(\mu,\mu')$ belong to the same
cohomology class, respectively. If the cohomology classes of $\nu$
and $\mu$ vanish then we can find $\theta$ such that
$l'=l+\theta_kw^k$ is a Lagrangian.
\end{proof}

By setting $w^i=0$ in Equation (\ref{munu}) we see that
\[
\nu_i=\gamma^k(0)\spd{l}{w^i}{w^k}(0).
\]
But as we pointed out earlier, it is a necessary condition for
$\gamma$ to be derivable from a Lagrangian $l$ on $\g$ that
$\gamma(0)=0$.  The significance of the vanishing of $\nu$ as a
condition for $\gamma$ to be derivable from a Lagrangian is clear.

We have derived two sets of conditions for the existence of an
invariant Lagrangian, each involving a pair of cohomology classes.
One would hope that the two pairs of cohomology classes are the same.
This is in fact the case, as we now show.

First we show that $b_i$ and $\nu_i$ are the same
constants. From Equation~(\ref{bi}) we have
\[
b_i=\Gamma_{w^k=0}(\vlift{\tilde{E}_i}(L))
=(\Gamma^j\vlift{\hat{E}_j}\vlift{\tilde{E}_i}(L))|_{w^k=0}.
\]
Since $b_i$ is constant it is enough to evaluate the right-hand side
at $e$; here the distinction between $\vlift{\tilde{E}_i}$ and
$\vlift{\hat {E}_i}$ disappears, and we obtain
\[
b_i=\gamma^k(0)\spd{l}{w^i}{w^k}(0)=\nu_i.
\]

To find the relationship between $\alpha_{ij}$ and $\mu_{ij}$ it
turns out to be convenient to work entirely in terms of the
right-invariant fields $\tilde{E}_i$; in the end we will evaluate
everything at $e$, using the constancy of the $\alpha_{ij}$, and
again we can take advantage of the fact that at $e$ the distinction
between $\tilde{E}_i$ and $\hat{E}_i$ disappears.  The relations
between the complete and vertical lifts of $\tilde{E}_i$ and
$\hat{E}_i$ given in Equation~(\ref{clift}) now come into play.  In
particular,
$\Gamma=w^jA^j_i\clift{\tilde{E}_i}+\Gamma^jA^j_i\vlift{\tilde{E}_i}$;
we set $A^i_jw^j=v^i$, $A^i_j\Gamma^j=\tilde{\Gamma}^i$.

It follows from Equation~(\ref{ejfi}) that
$\hat{E}_i(f_j)=\vlift{\hat{E}_i}(\clift{\tilde{E}_j}(L))$, whence
\[
\tilde{E}_i(f_j)=\vlift{\tilde{E}_i}(\clift{\tilde{E}_j}(L))
=\vlift{\tilde{E}_i}(\Gamma(\vlift{\tilde{E}_j}(L))).
\]
Using the expression above for $\Gamma$, and the evident fact that
$\vlift{\tilde{E}_i}(v^j)=\delta_i^j$, we find that
\begin{eqnarray*}
\lefteqn{\clift{\tilde{E}_i}(\vlift{\tilde{E}_j}(L))+
v^k\vlift{\tilde{E}_i}(\clift{\tilde{E}_k}(\vlift{\tilde{E}_j}(L)))}\\
&&\mbox{}+\vlift{\tilde{E}_i}(\tilde{\Gamma}^k)\vlift{\tilde{E}_k}(\vlift{\tilde{E}_j}(L))
+\tilde{\Gamma}^k\vlift{\tilde{E}_k}(\vlift{\tilde{E}_i}(\vlift{\tilde{E}_j}(L)))
-\vlift{\tilde{E}_i}(\clift{\tilde{E}_j}(L))=0,
\end{eqnarray*}
or (remembering that $[\tilde{E}_i,\tilde{E}_j]=-C_{ij}^k\tilde{E}_k$)
\begin{eqnarray*}
\lefteqn{\vlift{\tilde{E}_i}(\clift{\tilde{E}_j}(L))
-\vlift{\tilde{E}_j}(\clift{\tilde{E}_i}(L))+C_{ij}^k\vlift{\tilde{E}_k}(L)}\\
&&=\Gamma(\vlift{\tilde{E}_i}(\vlift{\tilde{E}_j}(L))
-C_{ik}^lv^k\vlift{\tilde{E}_l}(\vlift{\tilde{E}_j}(L))
+\vlift{\tilde{E}_i}(\tilde{\Gamma}^k)\vlift{\tilde{E}_k}(\vlift{\tilde{E}_j}(L)).
\end{eqnarray*}
On taking the skew part we find that
\begin{eqnarray*}
\lefteqn{\vlift{\tilde{E}_i}(\clift{\tilde{E}_j}(L))
-\vlift{\tilde{E}_j}(\clift{\tilde{E}_i}(L))
+C_{ij}^k\vlift{\tilde{E}_k}(L)}\\
&&=\onehalf\left(\vlift{\tilde{E}_i}(\tilde{\Gamma}^k)+C_{li}^kv^l\right)
\vlift{\tilde{E}_k}(\vlift{\tilde{E}_j}(L))
-\onehalf\left(\vlift{\tilde{E}_j}(\tilde{\Gamma}^k)+C_{lj}^lv^l\right)
\vlift{\tilde{E}_k}(\vlift{\tilde{E}_i}(L)).
\end{eqnarray*}
The left-hand side is $\alpha_{ij}+C^k_{ij}(\vlift{\tilde{E}_k}(L)-f_k)$.
At $e$, the right-hand side is
\[
\onehalf\left(\fpd{\gamma^k}{w^i}+C^k_{li}w^l\right)k_{jk}
-\onehalf\left(\fpd{\gamma^k}{w^j}+C^k_{lj}w^l\right)k_{ik}
=\psi_i^kk_{jk}-\psi^k_jk_{ik}=\chi_{ij}.
\]
Thus
\[
\alpha_{ij}=\chi_{ij}-C^k_{ij}(\vlift{\tilde{E}_k}(L)-f_k)|_e.
\]
Now let $l$ be the restriction of $L$ to $T_eG$. Of course $L$ is
not assumed to be invariant, so this differs from the association
between $l$ and $L$ given earlier; nevertheless, it is true that
\[
\spd{l}{w^i}{w^j}=k_{ij},
\]
where $(k_{ij})$ satisfies the reduced Helmholtz conditions.  So we
can write $\alpha_{ij}=\mu_{ij}+C^k_{ij}f_k(e)$.  It is apparent that
$\alpha_{ij}$ and $\mu_{ij}$ define the same cohomology class (they
differ by a coboundary).

\section{The Lie algebroid}

Our policy while working on $TG$ in earlier sections was to write
everything in terms of $G$-invariant quantities, that is, quantities
determined by their values on $T_eG=\g$.  This paves the way towards
expressing the whole theory in terms of $\g$, or more accurately in
terms of a vector bundle over $\g$, namely $\g\times T\g\to\g$.  We
can identify invariant vector fields on $TG$, via their restrictions
to $T_eG$, with sections of $\g\times T\g\to\g$, as we pointed out
earlier.  The bracket of two invariant vector fields remains
invariant, and so the bracket of vector fields on $TG$ determines a
bracket of sections of $\g\times T\g\to\g$.  This is evidently
$\R$-bilinear and skew, and it satisfies the Jacobi identity by
construction.  We will now obtain an explicit formula for this
bracket, and deduce that it is a Lie algebroid bracket, i.e.\ a
bracket with the above properties that satisfies an appropriate
Leibniz rule when sections are being multiplied with functions on
the base manifold.

Let $\xi^i\clift{\hat{E}_i}+X^i\vlift{\hat{E}_i}$ and
$\eta^i\clift{\hat{E}_i}+Y^i\vlift{\hat{E}_i}$ be two invariant vector
fields, so that $\clift{\tilde{E}_j}(\xi^i)= \clift{\tilde{E}_j}(X^i)=
\clift{\tilde{E}_j}(\eta^i)= \clift{\tilde{E}_j}(Y^i)=0$.  These
invariance conditions, when expressed in terms of the vector fields of
the invariant basis, become for example
$\clift{\hat{E}_j}(\xi^i)=w^kC^l_{kj}\vlift{\hat{E}_l}(\xi^i)$, using
Equation~(\ref{clift}).  Thus
\[
[\xi^i\clift{\hat{E}_i},\eta^j\clift{\hat{E}_j}]=
\left(\xi^i\eta^jC^k_{ij}+\xi^iw^jC^l_{ji}\vlift{\hat{E}_l}(\eta^k)
-\eta^iw^jC^l_{ji}\vlift{\hat{E}_l}(\xi^k)\right)\clift{\hat{E}_k},
\]
while
\[
[\xi^i\clift{\hat{E}_i},Y^j\vlift{\hat{E}_j}]=
-Y^j\vlift{\hat{E}_j}(\xi^k)\clift{\hat{E}_k}
+\left(\xi^iY^jC_{ij}^k+\xi^iw^jC^l_{ji}\vlift{\hat{E}_l}(Y^k)\right)
\vlift{\hat{E}_k}.
\]
The bracket may be written as follows. We identify $\xi$, $\eta$ with
$\g$-valued functions on $\g$, $X$, $Y$ with vector fields on $\g$;
$\bar{\xi}$ is the vector field corresponding to $\xi$. We think of
$w^kC^i_{kj}$ as the components of a type (1,1) tensor field on
$\g$ which we denote by $\A$:\ thus
\[
\A=w^kC^i_{kj}\vf{w^i}\otimes dw^j.
\]
The Lie algebra bracket $\{\cdot,\cdot\}$ extends naturally to an
algebraic bracket on $\g$-valued functions on $\g$, so that
$\{\xi,\eta\}=\xi^j\eta^kC^i_{jk}E_i$.  Then
\begin{eqnarray*}
\lequiv(\xi,X),(\eta,Y)\requiv&=&
\left(\{\xi,\eta\}+\A(\bar{\xi})(\eta)-\A(\bar{\eta})(\xi)+X(\eta)-Y(\xi),\right.\\
&&\quad\left.
[\A(\bar{\xi}),Y]-[\A(\bar{\eta}),X]+\A(\overline{Y(\xi)})-\A(\overline{X(\eta)})
+[X,Y]\right).
\end{eqnarray*}

For any function $f$ on $\g$ we have
$\lequiv(\xi,X),f(\eta,Y)\requiv=f\lequiv(\xi,X),(\eta,Y)\requiv
+\rho(\xi,X)(f)(\eta,Y)$, as required, where the so-called anchor of
the Lie algebroid is given by
\[
\rho(\xi,X)=\A(\bar{\xi})+X\in\vectorfields\g.
\]
 Thus the
bracket $\lequiv\cdot,\cdot\requiv$ does indeed define a Lie
algebroid structure on $\g\times T\g\to\g$.

We denote by $e_i$ the section $(E_i,0)$ of $\g\times T\g\to\g$, and
$W_i$ the section $(0,\partial/\partial w^i)$; then $\{e_i,W_i\}$ is
a basis of sections, and we have
\[
\lequiv e_i,e_j\requiv=C_{ij}^ke_k,\quad
\lequiv e_i,W_j\requiv=C_{ij}^kW_k,\quad
\lequiv W_i,W_j\requiv=0.
\]
We denote by $\delta$ the induced exterior derivative operator on
sections of exterior powers of the dual of the algebroid, and by
$\{e^i,W^i\}$ the basis dual to $\{e_i,W_i\}$. Then for any function
$f$ on $\g$,
\[
\delta f=\fpd{f}{w^i}(w^kC^i_{kj}e^j+W^i),
\]
while
\[
\delta e^i=-\onehalf C^i_{jk}e^j\wedge e^k,\quad
\delta W^i=-C^i_{jk}e^j\wedge W^k.
\]
Using these formulae we can express the Euler-Poincar\'{e} equations
in terms of the Lie algebroid structure, as follows.  The vertical
endomorphism $S$ on the Lie algebroid is just
$S(\xi,X)=(0,\bar{\xi})$. The invariant Lagrangian is represented by
a function $l$ on $\g$.  We define, in analogy to the usual case, a
Cartan form $\theta$ and an energy function $\E$ by
\[
\theta=S(\delta l), \quad \E=\langle\bar\Delta,\delta l\rangle-l,
\]
where $\Delta=w^i e_i$ and $\bar\Delta=w^iW_i$. As we will show by
a direct calculation, provided that $l$ is regular (in the sense
that its Hessian is non-singular) the equation
\[
i_\Gamma \delta\theta=-\delta \E
\]
determines a unique section $\Gamma$, which is of second-order
differential equation type, so that it takes the form
$w^ie_i+\gamma^iW_i$, and the $\gamma^i$ satisfy the
Euler-Poincar\'{e} equations for $l$. In fact
\begin{eqnarray*}
\theta&=&\fpd{l}{w^i}e^i\\
\delta\theta&=&
\left(\spd{l}{w^j}{w^l}w^kC^l_{ki}-\onehalf\fpd{l}{w^k}C^k_{ij}\right)e^i\wedge e^j
-\spd{l}{w^i}{w^j}e^i\wedge W^j\\
\E&=&w^i\fpd{l}{w^i}-l\\
\delta \E&=&
w^l\spd{l}{w^i}{w^l}(w^kC^i_{kj}e^j+W^i).
\end{eqnarray*}
Let us write $\Gamma=\xi^ie_i+f^iW_i$.  Then the vanishing of the
$W_i$ component of $i_\Gamma \delta\theta+\delta \E$ gives
\[
(-\xi^j+w^j)\spd{l}{w^i}{w^j}=0,
\]
whence $\xi^i=w^i$ when $l$ is regular.  When this result is inserted
in $i_\Gamma \delta\theta+\delta \E$ several terms cancel, and the remaining
terms in the $e_i$ component reduce to
\[
\gamma^j\spd{l}{w^i}{w^j}-w^kC^j_{ki}\fpd{l}{w^j},
\]
as required.

The above derivation of the Euler-Poincar\'e equations was inspired by
Mart\'inez' framework \cite{Ed} for Lagrangian systems on a Lie
algebroid.  This framework is based on the so-called prolongation
algebroid of the underlying Lie algebroid.  It should be remarked that
although the underlying algebroid of the current system is just the
Lie algebra $\g$, the algebroid we have defined in this section does
not coincide with the prolongation algebroid of the Lie algebra.  The
prolongation algebroid can most easily be defined as follows.  Observe
that both components of a section of $\g\times T\g\to\g$ have a
natural bracket structure.  For the first, we have the natural
extension of the Lie algebra bracket to $\cinfty{\g,\g}$-functions,
and for the second component we have the Lie bracket of vector fields.
The easiest way to combine these two into one Lie bracket structure is
as follows:
\begin{equation}\label{LABP}
\LABP{(\xi,X),(\eta,Y)} = (\{\xi,\eta\} + X(\eta) - Y(\xi),[X,Y]).
\end{equation}
It is easy to check that this is a Lie algebroid whose anchor map
$\rho^1: \la\times T\la\to T\la$ is simply the projection on the
second component.  In our basis \[ \LABP{e_i,e_j} = C^k_{ij} e_k,
\qquad \LABP{e_i,W_j} = 0, \quad\quad \LABP{W_i,W_j} = 0.
\]
A short calculation reveals that indeed the expression
$i_\Gamma\delta^1\theta^1 =-\delta^1 \E$ leads again to the
Euler-Poincar\'e equations. It is, however, easy to guess the
relationship between the two algebroid structures:\ the section map
$(\xi,X)\mapsto (\xi,\A(\bar{\xi})+X)$
is an isomorphism of the first of the Lie algebroids with the second.
The more complicated structure of the first Lie algebroid (or of that
presentation of the common Lie algebroid if one regards isomorphic
algebroids as identical in principle) arises from our desire to work
always with invariant objects on $TG$ and objects on $\g$ derived
from them by restriction.

We conclude this section by showing that the restrictions of the
horizontal lift, the Jacobi endomorphism and the dynamical
derivative to $\g$ have a direct interpretation in the first Lie
algebroid.

At the level of the Lie algebra, $\g$-valued functions on $\g$
(sections of $\g\times\g\to \g$) play the role that the vector fields
along the tangent bundle projection played at the level of $TG$.  That
is, there is a well-defined vertical lift of such a function
$\xi=\xi^i(w)E_i$ to the section $\vlift\xi= \xi^iW_i$ of the Lie
algebroid.  Moreover, one can easily verify that each vector field
$\gamma$ on $\la$ defines a horizontal lift
\[
\hlift{\xi} = \xi^i(e_i - \lambda_i^j W_j)
\]
to sections of the Lie algebroid, or equivalently, a splitting of
the short exact sequence
\[
0 \to \{0\}\times T\la \to \la \times T\la \to \la\times\la \to 0
\]
of vector bundles over $\g$. Observe that the restriction to $\g$ of
the horizontal lift of an invariant vector field along $\tau$ is in
fact the horizontal lift of the restriction to $\g$ of that vector
field along~$\tau$.

The functions $\psi_i^j$ we have introduced in previous sections
have a nice interpretation in the algebroid set-up. The projection
by the anchor map of the horizontal algebroid section $\hlift{E}_i =
e_i -\lambda_i^j W_j$ to a vector field on $\g$ is exactly
\[
\rho(\hlift{E}_i) = (w^kC_{ki}^l - \lambda_i^l)\fpd{}{w^l} =
\psi_i^l \fpd{}{w^l}.
\]

As before, we can define a Jacobi endomorphism and a dynamical
derivative by considering the horizontal and vertical parts of the
brackets of the algebroid section $\Gamma = w^ie_i+\gamma^iW_i$:
\[
\LAB{\Gamma,\vlift\eta}= - \hlift\eta + (\nabla\eta\vlift), \qquad
\LAB{\Gamma,\hlift\eta} = (\nabla\eta\hlift) + (\Phi(\eta)\vlift),
\]
where $\nabla$ acts like a derivative in the sense that for
$f\in\cinfty\g$, $\nabla(f\eta) = f\nabla\eta + \gamma(f)\eta$,
and $\Phi$ is tensorial with coefficients $\phi^i_j$ as before. We
have $\nabla E_i =  \lambda_i^k E_k$,
and in fact in both cases the operators are nothing but the original operators
restricted to $\la=T_eG$.

We define the vertical derivative for $\xi\in\cinfty{\g,\g}$ as the
map $\DV{\xi}: \cinfty{\g,\g} \to \cinfty{\g,\g}$, determined by
$\DV{\xi}f= \bar\xi(f)$ for $f\in\cinfty{\g}$, $\DV{\xi}\eta=0$ for a
vector $\eta$ of $\g$ (in other words a constant element of
$\cinfty{\g,\g}$), and the obvious Leibniz rule for multiplication
by functions.

We can now restate the reduced Helmholtz conditions in a
coordinate-free form.  The multiplier $(k_{ij})$ is a
matrix of functions on $\g$, or equivalently a map $k:\cinfty{\g,\g}
\times \cinfty{\g,\g}\to\cinfty{\g}$, which satisfies the conditions
\begin{eqnarray*}
&&\det k\neq 0,\quad k(\xi,\eta)=k(\eta,\xi),\\
&& \nabla k=0,\\
&& k(\Phi(\eta),\zeta)=k(\eta,\Phi(\zeta)),\\
&& \DV \xi k(\eta,\zeta) = \DV \eta k(\xi,\zeta).
\end{eqnarray*}
for all $\xi,\eta,\zeta\in\cinfty{\g,\g}$.

Suppose that the Hessian of $l\in\cinfty{\g}$ is $k$. Then if
$\theta=(\partial l/\partial w^i) e^i$ as before, we can define a
2-form $\hlift{\delta}\theta$ on the algebroid by requiring it to
vanish whenever one of its arguments is a vertical section (i.e.\ it
is semi-basic) and by setting
\[
\hlift{\delta}\theta (\hlift{\xi},\hlift{\eta}) =
\rho(\hlift{\xi})(\theta(\hlift{\eta})) -
\rho(\hlift{\eta})(\theta(\hlift{\xi})) - \theta (\lequiv
\hlift{\xi},\hlift{\eta}\requiv ).
\]
Then $\hlift{\delta}\theta =\onehalf\mu_{ij} e^i\wedge e^j$, where
\begin{eqnarray*}
\mu_{ij} &=& \rho(\hlift{E}_i)(\theta(\hlift{E}_j)) -
\rho(\hlift{E}_j)(\theta(\hlift{E}_i))
- \theta (\lequiv \hlift{E}_i,\hlift{E}_j\requiv )\\
&=& \psi_i^l k_{lj} - \psi_j^l k_{li} - C_{ij}^k \fpd{l}{w^k}.
\end{eqnarray*}
These coefficients are exactly those we have encountered in the
previous section.  Recall from the proof of Theorem~4 that the reduced
Helmholtz conditions ensure that the $\mu_{ij}$ are constants and that
they form a cocycle.  A necessary condition for a Lagrangian to exist
is that $\mu_{ij}$ is a coboundary.  We can now re-express this
statement in terms of the Lie algebroid.  For a 2-form
$\mu=\hlift{\delta}\theta$ with constant coefficients we get that
$\delta \mu = -\onehalf \mu_{ij}C^i_{lk}e^l \wedge e^k \wedge e^j$.
Therefore, $\delta$-closure of the 2-form $\mu$ amounts to the cocycle
condition.  On the other hand, the condition that the $\mu_{ij}$ are
of the form $\alpha_kC^k_{ij}$ for some $\alpha_k$ is equivalent to
$\mu$ being exact.  Similarly, the reduced Helmholtz conditions ensure
that the semi-basic 1-form $\nu=
i_\Gamma\delta\theta+\delta\E-i_\Gamma\mu$ has constant coefficients
$\nu_i$ that form a cocycle, or that $\delta\nu=0$.  Theorem~4 states
that $\nu$ should vanish for a Lagrangian to exist.

\section{Examples and applications}

The method of reduced Helmholtz conditions really comes into its own
when one has to deal with any specific problem.  In practice ---
certainly, if the following examples are representative --- the
cohomological conditions do not play much of a role.  Where there is
no invariant Lagrangian this is because the reduced Helmholtz
conditions fail, often at the level of regularity.  Where one is
able to find a solution of the Helmholtz conditions one is usually
able to integrate it by hand, and check directly for which
integration constants the Euler-Poincar\'e equations are equivalent
to the equations associated with the vector field $\gamma$.

To save space, in the following examples we will write $k_{ijl}$ for
$\partial k_{ij}/\partial w^l$, and we will implicitly assume that the
conditions $k_{ij}=k_{ji}$, $k_{ijl}=k_{ilj}$ and so on are satisfied.

\subsection{The canonical connection on a Lie group}

The canonical connection on a Lie group is defined as a covariant
derivative operator by $\nabla_X Y = \onehalf[X,Y]$, where $X$ and
$Y$ are any two left-invariant vector fields on $G$.  As we
mentioned in the Introduction, the invariant inverse problem for the
canonical connection has been studied by Muzsnay in \cite{Zolt};
however, he uses methods different from ours.

The connection coefficients of the canonical connection with respect
to the left-invariant basis $\{{\hat E}_i\}$ of $\vectorfields{G}$ are
just $\frac{1}{2}C^i_{jk}$.  So the coefficients $\Gamma^i$ of the
corresponding second-order differential equation field (the geodesic
spray) are in this case $\Gamma^i = \onehalf C^i_{jk} w^j w^k =0$.
The reduced equations are therefore simply ${\dot w}^i=0$.  In fact
the geodesics through the identity of $G$ are just the 1-parameter
subgroups.

If a left-invariant Lagrangian $L$ exists, then
\begin{equation}\label{Zolt1}
 C^k_{ij}\vlift{\hat E}_k(L)w^i=0 \qquad \mbox{or}\qquad
 C^k_{ij}\fpd{l}{w^k}w^i=0.
\end{equation}
In view of relation (\ref{clift}), $L$ must also be right-invariant,
i.e.\ $\clift{{\hat E}_j}(L)=0$, and thus bi-invariant.  At the
level of the Lie algebra, this means that $l\in\cinfty{\g}$ will be
$\ad$-invariant.  This observation is in fact Proposition~2 in
\cite{Zolt}. Thus a Lagrangian is a function which is constant on
the adjoint orbits in $\g$ and whose Hessian is non-singular.

We will use our methods to investigate the invariant inverse problem
for the canonical connection.  The first observation is that in this
case the reduced Helmholtz condition $k_{il}\phi^l_j=k_{jl}\phi^l_i$
is a consequence of the other conditions. Since $\gamma^i=0$,
\begin{equation}\label{nabla}
\gamma^k\fpd{k_{ij}}{w^k} - k_{kj}\lambda^k_i - k_{ik}\lambda^k_j
=\onehalf w^l(k_{kj}C^k_{li}+k_{ik}C^k_{lj})=0.
\end{equation}
On the other hand, $\phi_j^l=-\oneq w^m w^n C^k_{mj}C^l_{nk}$. But
\[
w^m w^n C^k_{mj}C^l_{nk}k_{il}-w^m w^n C^k_{mi}C^l_{nk}k_{jl}= -w^m
w^n C^k_{mj}C^l_{ni}k_{kl}+w^m w^n C^k_{mi}C^l_{nj}k_{kl}=0,
\]
so the condition $k_{il}\phi^l_j=k_{jl}\phi^l_i$ holds by virtue of
condition (\ref{nabla}) and the symmetry of $k_{kl}$.

A second general remark concerns the cohomological conditions. In this
case Equation~(\ref{munu}) reads
\[
\mu_{ji}w^j+\nu_i=-C^l_{ki}w^k\fpd{l}{w^l},
\]
from which immediately $\nu_i=0$. Moreover
\[
\mu_{ij}=-C^k_{ij}\fpd{l}{w^k}(0),
\]
so $\mu_{ij}$ is a coboundary.  Thus the cohomological conditions are
automatically satisfied for the canonical connection.  Any function
$l$ whose Hessian satisfies the reduced Helmholtz conditions and is
such that $C^k_{ij}\partial l/\partial w^k(0)=0$ will be a Lagrangian;
in particular, if $l$ satisfies the reduced Helmholtz conditions and
we set
\[
l'=l-w^k\fpd{l}{w^k}(0)
\]
then $l'$ will be a Lagrangian.  So the inverse problem for the
canonical connection reduces essentially to the analysis of
condition (\ref{nabla}), in the form
$w^l(k_{kj}C^k_{li}+k_{ik}C^k_{lj})=0$, and the condition
$k_{ijk}=k_{ikj}$.  Where there is no Lagrangian this will often
become apparent by the fact that there is no non-singular $(k_{ij})$
satisfying the first of these conditions.

We will examine two specific situations, one in which there is no
Lagrangian, one in which there is one.

The first case is that of the Heisenberg algebra, which is a
3-dimensional algebra with the single non-trivial bracket relation
$\{E_1,E_3\}=E_2$. Condition (\ref{nabla}) amounts simply to
\[
k_{12}w^3=k_{22}w^3=k_{22}w^1=k_{32}w^1=-k_{12}w^1+k_{32}w^3=0.
\]
Evidently $k_{12}=k_{22}=k_{32}=0$, and there is no non-singular
$3\times3$ matrix $(k_{ij})$ satisfying the Helmholtz conditions.

For our second example we take the 4-dimensional Lie algebra with
bracket relations
\[
\{E_2,E_3\}=E_1,\qquad \{E_2,E_4\}=E_2,\qquad \{E_3,E_4\}=-E_3
\]
(this is the algebra $\mathrm{A}_{4,8}$ in the classification of
Patera et al.\ \cite{Pat}).  Condition (\ref{nabla}) says in this
case that the matrix
\[
\left[\begin{array}{rrrr}
k_{11}&k_{12}&k_{13}&k_{14}\\
k_{12}&k_{22}&k_{23}&k_{24}\\
k_{13}&k_{23}&k_{33}&k_{34}\\
k_{14}&k_{24}&k_{34}&k_{44}\end{array}\right]
\left[\begin{array}{cccc}
0&w^3&-w^2&0\\
0&w^4&0&-w^2\\
0&0&-w^4&w^3\\
0&0&0&0\end{array}\right]
\]
must be skew-symmetric. This leads to the following 7 independent
equations for the 10 unknowns $k_{ij}$ (with $i\leq j$):
\begin{eqnarray*}
k_{11}w^3+k_{12}w^4&=&0=k_{11}w^2+k_{13}w^4,\\
k_{12}w^3+k_{22}w^4&=&0=k_{13}w^2+k_{33}w^4,\\
k_{24}w^2+k_{34}w^3&=&0,\\
k_{22}w^2-k_{24}w^4&=&(k_{14}+k_{23})w^3\\
k_{33}w^3-k_{34}w^4&=&(k_{14}+k_{23})w^2.
\end{eqnarray*}
Evidently $k_{44}$ is unconstrained by these equations. It turns out
that $k_{24}=k_{34}=0$. The remaining unknowns can conveniently be
expressed in terms of $k_{11}$ and $k_{23}$.  If for convenience we
set $k_{11}=(w^4)^2F$ (for $w^4\neq0$), $k_{23}=G$ and $k_{44}=H$
then $(k_{ij})$ is
\[
\left[\begin{array}{cccc}
(w^4)^2F&-w^3w^4F&-w^2w^4F&w^2w^3F-G\\
-w^3w^4F&(w^3)^2F&G&0\\
-w^2w^4F&G&(w^2)^2F&0\\
w^2w^3F-G&0&0&H
\end{array}\right].
\]
We next look at the conditions $k_{ijk}=k_{ikj}$.
From $k_{124}=k_{241}=0$ we find that
\[
w^3\fpd{(w^4F)}{w^4}=0.
\]
From $k_{224}=k_{242}=0$ we obtain
\[
(w^3)^2\fpd{F}{w^4}=0.
\]
It follows that $F=0$, except possibly where $w^3=0$ or $w^4=0$.
Thus $k_{11}=0$,  except possibly where $w^3=0$ or $w^4=0$; but then
by continuity $k_{11}=0$ everywhere; and similarly for the other
coefficients involving $F$. We are left with
\[
\left[\begin{array}{cccc}
0&0&0&-G\\
0&0&G&0\\
0&G&0&0\\
-G&0&0&H
\end{array}\right].
\]
This is evidently non-singular provided that $G$ is non-zero,
whatever $H$ may be. Continuing to analyse the consequences of the
condition $k_{ijk}=k_{ikj}$ we find that $G$ must be constant and
$H$ must be a function of $w^4$ alone. This gives as potential
Lagrangians
\[
l(w^1,w^2,w^3,w^4)=\lambda(w^2w^3-w^1w^4)+
\alpha_1w^1+\alpha_2w^2+\alpha_3w^3+h(w^4)
\]
where $\lambda$ and the $\alpha$\,s are constants with $\lambda$
non-zero, and $h$ is an arbitrary smooth function of its argument.
According to the general remarks made earlier, $l$ will in fact be a
Lagrangian if and only if $\alpha_1=\alpha_2=\alpha_3=0$ ($h$ doesn't
play a role here because $C^4_{ij}=0$).  It is easy to check this
directly.  In fact for the potential Lagrangian above it is easy to
see by direct calculation that all $\nu_i$ and almost all $\mu_{ij}$
vanish, except that $\mu_{23}=-\alpha_1$, $\mu_{24}=-\alpha_2$ and
$\mu_{34}=\alpha_3$ (and their skew counterparts).  We have shown
that there will exist a Lagrangian $l'=l+\theta_kw^k$ whose
Euler-Poincar\'e equations are exactly the equations associated to
$\gamma$ if we can find $\theta_k$ such that
$\mu_{ij}=\theta_kC^k_{ij}$.  One easily verifies that this condition
is only satisfied for $\theta_k=-\alpha_k$.  The sought-for Lagrangian
$l'$ is therefore the one above where one sets $\alpha_k=0$.

It is interesting to note that the most general Lagrangian in this
case is not just a quadratic form.

The method used by Muzsnay in \cite{Zolt} deals directly with the
equation
\[
w^jC^k_{ij}\fpd{l}{w^k}=0
\]
as a set of partial differential equations for $l$. In effect,
Muzsnay derives an integrability condition for this equation by
differentiating it, to obtain
\[
C^k_{ij}\fpd{l}{w^k}+w^lC^k_{il}\spd{l}{w^j}{w^k}=0.
\]
The part of this equation symmetric in $i$ and $j$ is our Helmholtz
condition (\ref{nabla}), the skew part states that the cocycle
$\mu_{ij}$ must vanish.  The examples we have considered above are two
of the many examples dealt with in \cite{Zolt}.  Muzsnay's results are
of course broadly the same as ours; however, in the second case though
he shows that a Lagrangian exists he does not indicate how to find
one, whereas we have obtained the most general one.  As Muzsnay points
out, the example is also treated in \cite{Gerard4dim}.  By using only
the unreduced Helmholtz conditions on $TG$, the authors of
\cite{Gerard4dim} look for a (not necessarily invariant) Lagrangian
$L$ for the canonical geodesic flow on any 4 dimensional Lie group.
Although they are not able to give an expression of the most general
Lagrangian, they observe in the case of the Lie algebra
$\mathrm{A}_{4,8}$ that the quadratic part of the Lagrangian above
(written in terms of invariant forms on $G$ in their set-up) generates
the flow of the canonical connection.  They also notice that the
quadratic part is a bi-invariant metric (as the theory predicts).

\subsection{The Bloch-Iserles equations}

These equations appear in e.g.\ \cite{BlochIserles,BlIsMaRa}. The
space of interest is $\Sym(n)$, the linear space of symmetric
$n\times n$ matrices. The equation is
\begin{equation}\label{BIeq}
\dot w = [w^2,N],
\end{equation}
where $w\in \Sym(n)$, $N$ is a skew-symmetric $n\times n$ matrix,
and the right-hand side is the commutator of matrices.  With the
help of $N$ one can give $\Sym(n)$ the structure of a Lie algebra,
the Lie algebra bracket being
\[
\{w_1,w_2\} = w_1 N w_2 - w_2 N w_1, \qquad\quad w_1,w_2\in \Sym(n).
\]
Can we find a Lagrangian $l\in \cinfty{\Sym(n)}$ for which
Equation~(\ref{BIeq}) is of Euler-Poincar\'e type with respect to the above Lie
algebra? The answer is in fact given in \cite{BlIsMaRa}:\ a
corresponding Lagrangian is
\begin{equation} \label{lagrangianBI}
l(w) = \onehalf \tr(w^2).
\end{equation}
We will show that the reduced Helmholtz conditions, applied to the
current Lie algebra and dynamical system, lead to the correct
Lagrangian.

To make things more accessible we will consider only the case
$n=2$. For a basis of the Lie algebra we take the matrices
\[
E_x = \left[\begin{array}{ll} 1 & 0 \\ 0 & 0\end{array}
\right],\quad E_y = \left[\begin{array}{ll} 0 & 1 \\
1 & 0\end{array} \right] \quad\mbox{and} \quad E_z =
\left[\begin{array}{ll} 0 & 0
\\ 0 & 1\end{array} \right] .
\]
Further, without loss of generality we can take $N$ to be
\[
\left[\begin{array}{rr} 0 & 1 \\ -1 & 0\end{array} \right].
\]
The non-vanishing Lie algebra brackets are then $\{E_x,E_y\} = 2E_z$,
$\{E_x,E_z\}=E_y$ and $\{E_y,E_z\}=2E_z$.
An arbitrary element of the Lie algebra is of the form
\[
w=x E_x + y E_y + z E_z =
\left[\begin{array}{ll} x & y \\
y & z\end{array} \right],
\]
and Equation (\ref{BIeq}) is
\[
\left[\begin{array}{ll}
\dot{x} & \dot{y} \\
\dot{y} & \dot{z}
\end{array}\right] =
\left[\begin{array}{cc}
-2 y(x+z) & x^2-z^2 \\
x^2-z^2 & 2y(x+z)
\end{array} \right].
\]
We use now the notation of the Lie algebroid formulation of the
Helmholtz conditions from Section~7. For $\Phi$  we find
\begin{eqnarray*}
\Phi(E_x) &=& (-3y^2 + \onehalf z^2) E_x + (\threehalf xy -2yz)E_y +
(4 y^2 -\onehalf xz) E_z,\\
\Phi(E_y) &=& (3xy-4yz) E_x +(4xz -\threehalf x^2- \threehalf z^2)  E_y
+(3yz - 4xy)E_z,\\
\Phi(E_z) &=& (4 y^2 -\onehalf xz) E_x+(\threehalf yz- 2xy) E_y +(-3y^2
+ \onehalf x^2) E_z,
\end{eqnarray*}
and for $\nabla$
\begin{eqnarray*}
\nabla E_x &=&  -(x+\onehalf z)E_y - y E_z, \\
\nabla E_y &=& (2x+z) E_x -(x + 2z)E_z, \\
\nabla E_z &=& y E_x + (z+ \onehalf x) E_y.
\end{eqnarray*}
The $\nabla$-equations in this case (taking the symmetry of $k_{ij}$
into account) are
\begin{equation} \label{nablaBI}
\begin{array}{l}
\gamma(k_{xx})+(2x+z)k_{xy}+ 2yk_{xz}=0, \\[1mm]
\gamma(k_{xy})+(x+\onehalf z)k_{yy}+yk_{yz}-(2x+z)k_{xx}+(x-2z)k_{xz}=0,\\[1mm]
\gamma(k_{xz})+(x+\onehalf z)k_{yz} +yk_{zz}-yk_{xx}
-(z+ \onehalf x)k_{xy}=0,\\[1mm]
\gamma(k_{yy})-2(2x+z)k_{xy}+2(x+2z)k_{yz}=0,\\[1mm]
\gamma(k_{yz})-(2x+z)k_{xz}
+(x+2z)k_{zz}-yk_{xy}-(z+\onehalf x)k_{yy}=0, \\[1mm]
\gamma(k_{zz})- 2yk_{xz} - (2z+x)k_{yz}=0.
\end{array}
\end{equation}
The $\Phi$-equations are
\begin{equation} \label{PhiBI}
\begin{array}{l}
k_{xx}(3yx-4yz)+k_{xy}(-\frac{3}{2}
x^2+4xz-\frac{3}{2}z^2)+k_{xz}(-4yx+3yz)
\\[1mm] \qquad = k_{xy}(-3y^2+\onehalf y^2)+k_{yy}(\frac{3}{2} yx-2yz)
+k_{yz}(4y^2-\onehalf xz),\\[2mm]
k_{xx}(4y^2- \onehalf xz)+k_{xy}(-2yx+\frac{3}{2}yz)+k_{xz}(-3y^2+\onehalf
x^2)\\[1mm]
\qquad =
k_{xz}(-3y^2+ \onehalf z^2)+k_{yz}(\frac{3}{2}xy-2yz)+k_{zz}(4y^2-\onehalf
xz),\\[2mm]
k_{xy}(4y^2-\onehalf
xz)+k_{yy}(-2xy+\frac{3}{2}yz)+k_{yz}(-3y^2+\onehalf
x^2)\\[1mm]
\qquad =
k_{xz}(3xy-4yz)+k_{yz}(-\frac{3}{2}x^2+4xz-\frac{3}{2}z^2)+k_{zz}(-4xy+3yz).
\end{array}
\end{equation}

We will first try to find a solution of (\ref{nablaBI}) in which
all $k_{ij}$ are constants. In that case, adding one half times the
$(y,y)$-equation to the $(x,x)$- and $(z,z)$-equations gives
$k_{xy}=0$ from which also $k_{xz}=0$ and $k_{yz}=0$. Then the
$(x,z)$-equation gives $k_{xx}=k_{zz}$ and the $(y,z)$-equation
gives $k_{zz}=\frac{1}{2}k_{yy}$. So the solutions of
(\ref{nablaBI}) with constant coefficients are of the form
\begin{equation}\label{kBI}
k = c \left[ \begin{array}{ccc} 1 & 0 & 0 \\ 0&2&0 \\ 0&0&1
\end{array} \right].
\end{equation}
It is easy to see that a multiplier $k$ of this form also satisfies
Equations (\ref{PhiBI}). The Hessian of the function
\[
l(x,y,z) = \onehalf(x^2+2y^2+z^2)
\]
takes the above form; this is exactly the Lagrangian (\ref{lagrangianBI}).

An expression for the most general solution of Equations
(\ref{nablaBI}) and (\ref{PhiBI}) is beyond the scope of the current
paper. However, instead of looking for constant solutions $k_{ij}$
as above, we could use an additional symmetry assumption. For
example, it seems natural to require that $k_{xx}=k_{zz}$ (but that
they are not necessarily constants). A tedious calculation reveals
that in that case the only possible solution of the reduced
Helmholtz conditions is again the multiplier in (\ref{kBI}) with
constant coefficients.

\subsection{An illustrative example on the Lie group of the
affine line}

There are only two distinct Lie algebras of dimension 2. In
this example we will use the Lie group of the affine line (the
Euclidean group). An element of this group is an affine map
$\R\to\R: t\mapsto\exp( q_1)t+q_2$ and can be represented by the
matrix
\[\left[
\begin{array}{ll} \exp(q_1) & q_2 \\ 0 & 1 \end{array}\right].
\]
The corresponding Lie algebra is given by the set of matrices of the
form
\[
\left[ \begin{array}{ll} x & y \\ 0 & 0 \end{array}\right].
\]
A basis for this algebra is
\[
E_x=\left[\begin{array}{cc} 1 & 0 \\ 0
& 0\end{array}\right], \qquad
E_y= \left[\begin{array}{cc} 0 & 1 \\
0 &0\end{array}\right]
\]
for which $\{E_x,E_y\}=E_y$. Let $A=a E_x + b E_y$ be a constant
vector in the Lie algebra. We will determine whether there exists a
regular Lagrangian for the dynamical system
\[
\dot w = \{w,\{w,A\}\},
\]
or, in the above basis,
\[
\dot{x}=0, \qquad \dot y= x(bx-ay).
\]
For this system
\[
\begin{array}{ll}
\Phi(E_x) = \frac{1}{4}(a-1)^2 xy E_y,
&\quad \nabla E_x = (-bx+\frac{1}{2}(a-1)y)E_y, \\[2mm]
\Phi(E_y)=-\frac{1}{4}(a-1)^2x^2 E_y , &\quad \nabla E_y =
\frac{1}{2}(a+1)x E_y.
\end{array}
\]
The $\nabla$-equations are therefore
\[
\begin{array}{l}
x(bx-ay)k_{xxy}-2(-xb+\onehalf(a-1)y) k_{xy} = 0, \\[2mm]
x(bx-ay)k_{xyy}- \onehalf (a+1)x k_{xy}-(-bx+\onehalf(a-1)y) k_{yy}
=0,\\[2mm] x(bx-ay)k_{yyy}-  (a+1)x k_{yy} = 0,
\end{array}
\]
and the only $\Phi$-equation is
\[
-(a-1)^2x^2k_{xy} = (a-1)^2xyk_{yy}.
\]
If we differentiate the $\Phi$-equation with respect to $x$ and $y$ we
obtain two more equations for the $k_{ijk}$:
\[
\begin{array}{l} -(a-1)^2(x^2k_{xxy}
+2xk_{xy})=(a-1)^2(xyk_{xyy}+
 y k_{yy}),\\[2mm]
-(a-1)x^2k_{xyy} = (a-1)^2(xyk_{yyy} + xk_{yy}).
\end{array}
\]
The component $k_{xx}$ of the Hessian and its derivative $k_{xxx}$
are absent from these equations, and they will also not show up in
any derived equation; there will therefore always remain freedom of
choice for the $x$-derivative of $k_{xx}$. We get 6 homogeneous
linear equations in the 5 unknowns $k_{xy}$, $k_{yy}$, $k_{xxy}$,
$k_{xyy}$ and $k_{yyy}$.  If the rank of this system is less than 5
the system will have a non-zero solution. When $a=1$, the rank is
clearly 3. It can easily be verified that in all other cases the
rank is 4.

For reasons of clarity, we will deal first with the case where
$a=1$.

{\bf 1. The case where $a=1$}. In this case the $\Phi$-equation is
identically satisfied. The $\nabla$-equations are now
\[
\begin{array}{l}x (bx - y) k_{xxy} + 2 bx k_{xy} = 0, \\
 x(bx-y)k_{xyy}-xk_{xy}+bx k_{yy} = 0, \\
x(bx-y)k_{yyy}-2xk_{yy} = 0. \end{array}
\]
From the first and the last of these equations, we find that
\[
k_{xy}= \frac{f_1(y)}{(bx-y)^2}\qquad\mbox{and}\qquad
k_{yy}=\frac{f_2(x)}{(bx-y)^2}
\]
respectively, as long as $x\neq 0$ and $bx-y\neq 0$. By substituting
this result in the second equation and by interpreting $k_{xyy}$
once as $\partial k_{yy}/\partial x$ and once as $\partial
k_{xy}/\partial y$, we get a system of ODE's from which we can
determine $f_1(y)$ and $f_2(x)$. They are
\[
f_1(y)=-b\alpha_2 -\alpha_1 y\qquad\mbox{and}\qquad f_2(x)=\alpha_1
x +\alpha_2.
\]
The solution of the $\nabla$-equations is therefore of the form
\[ k= \left[\begin{array}{cc}
\frac{b^2\alpha_2-b\alpha_1(bx-y)}{2(bx-y)^2}+f(x) &
-\frac{b\alpha_2+\alpha_1y}{(bx-y)^2}\\[2mm]
-\frac{b\alpha_2+\alpha_1y}{(bx-y)^2} &
\frac{\alpha_1x+\alpha_2}{(bx-y)^2}\end{array} \right],\qquad
\det(k)=\frac{f(x)(\alpha_1x+\alpha_2)-\alpha_1^2}{(bx-y)^2}.
\]
This matrix is, however, not defined on the whole of $\R^2$. By
continuity it exists on $x=0$ but it is not defined on $bx-y=0$. So,
there is no regular multiplier on $\R^2$.

This is not the end of the story, however. The dynamical equations
are now $\dot x=0$ and $\dot y=x(bx-y)$. Notice that the lines $x=0$
and $bx-y=0$ are both invariant under the flow. They divide the
space $\R^2$ into regions, each invariant under the flow of the
dynamical system. The matrix above is well-defined on the invariant
region with $bx-y\neq 0$.  It will be a multiplier provided its
determinant is not zero, that is, provided
$f(x)(\alpha_1x+\alpha_2)-\alpha_1^2\neq0$. The function
$l(x,y)=-\alpha_2\ln|bx-y|-\alpha_1x\ln|bx-y|+ \alpha_3y+h(x)$, with
$h''(x)=f(x)$, has the above matrix as its Hessian.  However, for
$l$ to give the required Euler-Poincar\'e equations, $\alpha_2$ and
$\alpha_3$ must vanish.  A non-degenerate Lagrangian on $bx-y\neq0$
is therefore
\[
l(x,y)=-\alpha_1x\ln|bx-y|+h(x),
\]
where $\alpha_1$ is a non-vanishing constant, and $h(x)$ is an
arbitrary function which is not of the form
$\alpha_1(x\ln|x|-x)+\alpha_4 x + \alpha_5$ for any
constants $\alpha_4$ and $\alpha_5$.

In the following cases it will happen that there is no regular
Lagrangian defined on the whole of $\R^2$, but it may be possible to
find Lagrangians for subsets of $\R^2$ invariant under the dynamical
flow.

{\bf 2.  The case where $a\neq 1$.} In this case, the
$\Phi$-equations come into play. As before, we can search first for
the most general class of solutions of the $\nabla$-equations, and
then restrict to only those that also satisfy the $\Phi$-equation.
Notice that e.g.\ the last of the $\nabla$-equations leads to a
further division of this case in subcases. We have
\[
k_{yy} = \left\{ \begin{array}{ll} f_2(x) (bx-ay)^{-\frac{1}{a}-1},
&\qquad a\neq 0,
\\[2mm]\displaystyle
f_2(x) \exp\left(\frac{y}{bx}\right),  &\qquad a=0,\, b\neq 0, \\[2mm] 0,
&\qquad a=0,\, b=0.
\end{array}   \right.
\]
We will only summarize the results.

{\em 2A. The case where $a\neq 0$.} There is a regular Lagrangian of
the form
\[
l(x,y)= \frac{\alpha_1}{1-a}|ay-bx|^{1-\frac{1}{a}}|x|^{\frac{1}{a}}
+ h(x),
\]
where $\alpha_1$ is a non-zero constant and $h(x)$ is an arbitrary,
but non-affine function. The lines $x=0$ and $ay-bx=0$ are invariant
under the flow.

{\em 2B. The case where $a=0$ and $b\neq 0$.} There is a regular
Lagrangian of the form
\[
l(x,y) = \alpha_1 b^2 x \exp\left(\frac{y}{bx}\right) + h(x),
\]
where $\alpha_1$ is a non-zero constant and $h(x)$ is an arbitrary
but non-affine function. The line $x=0$ is invariant under the flow.

{\em 2C. The case where $a=0$ and $b = 0$.} This is a degenerate
case, there is no regular multiplier.

Having decided in all cases whether a Lagrangian $l$ on the Lie
algebra $\g$ exists or not, it is instructive to give an expression
for the corresponding Lagrangians $L$ at the level of the Lie group
$G$. If $(q_1,q_2)$ are coordinates on the Lie group, then a
left-invariant basis of vector fields is given by
\[
{\hat E}_x = \fpd{}{q_1}, \qquad {\hat E}_y = \exp(q_1)\fpd{}{q_2}.
\]
Fibre coordinates $(w^i)=(x,y)$ with respect to this basis and $({\dot
q}_1,{\dot q}_2)$ with respect to the coordinate field basis are
related as $x={\dot q}_1, y= \exp(-q_1){\dot q}_2$.  A right-invariant
basis is
\[
{\tilde E}_x = \fpd{}{q_1} + q_2\fpd{}{q_2}, \qquad {\tilde E}_y =
\fpd{}{q_2}.
\]
The complete and vertical lifts of the left-invariant basis fields are
\[
\clift{\hat E}_x = \fpd{}{q_1}, \qquad \clift{\hat E}_y =
\exp(q_1)\left(\fpd{}{q_2} +{\dot q}_1\fpd{}{{\dot q}_2}\right),\qquad
\vlift{\hat E}_x = \fpd{}{{\dot q}_1}, \qquad \vlift{\hat E}_y =
\exp(q_1)\fpd{}{{\dot q}_2}.
\]
We can now rewrite a second-order field $\Gamma$ in any of the
following forms
\begin{eqnarray*}
\Gamma &=& {\dot q}_1 \fpd{}{q_1} + {\dot q}_2 \fpd{}{q_2}
+ f_1\fpd{}{{\dot q}_1} + f_2 \fpd{}{{\dot q}_2} \\
&=&  {\dot q}_1\fpd{}{q_1} +  \exp(-q_1) {\dot q}_2
\left(\exp(q_1)\left(\fpd{}{q_2}+{\dot q}_1\fpd{}{{\dot q}_2}\right)\right)\\
&&\mbox{} + f_1 \fpd{}{{\dot q}_1}
+ \left( \exp(-q_1)(f_2 - {\dot q}_1{\dot q}_2)\right)\exp(q_1)\fpd{}{{\dot q}_2}\\
&=& x \clift{\hat E}_x + y \clift{\hat E}_y + \Gamma_x  \vlift{\hat E}_x
+ \Gamma_y \vlift{\hat E}_y.
\end{eqnarray*}
In the example under consideration, $\Gamma_x=0$ and $\Gamma_y=
x(bx-ay)$, so
\[
f_1= 0, \qquad f_2 = (1-a){\dot q}_1{\dot q}_2 + \exp(q_1)b {\dot
q}_1^2).
\]

Let's look, for example, at case 2B ($a=0$), where we have stated
above that there exist a regular Lagrangian on the Lie algebra of
the form $l(x,y) = \alpha_1 b^2 x \exp(y/bx) + h(x)$ ($\alpha_1\neq
0$, $h$ non-affine). By using left translations we can extend this
to a Lagrangian on the whole of $TG$:
\[
L(q_1,q_2,{\dot q}_1,{\dot q}_2) = \alpha_1 b^2 {\dot q}_1
\exp\Big(\frac{ \exp(-q_1){\dot q}_2}{b {\dot q}_1}\Big) + h({\dot
q}_1).
\]
Obviously, this Lagrangian is invariant:
\[
\clift{{\tilde E}_1}(L) = \fpd{L}{q_1}+q_2\fpd{L}{q_2}+ {\dot
q}_2\fpd{L}{{\dot q}_2}=0\quad \mbox{and}\quad \clift{{\tilde
E}_2}(L) = \fpd{L}{q_2}=0.
\]
A short calculation shows that the Euler-Lagrange equations for the
above Lagrangian do indeed return the differential equations ${\ddot
q}_1=0$ and ${\ddot q}_2 = {\dot q}_1{\dot q}_2 + b\exp(q_1) {\dot
q}_1^2$, as they should.

Our analysis reveals  only whether there is an invariant Lagrangian.
In the case 2C ($a=b=0$) where no such Lagrangian exists there could
still be a (necessarily non-invariant) Lagrangian for the
second-order system ${\ddot q}_1 =0$, ${\ddot q}_2 = {\dot q}_1{\dot
q}_2$ on the two-dimensional Lie group.  In \cite{Douglas} Douglas
gave a more-or-less complete classification of the inverse problem
for two-dimensional systems. A modern geometric approach to
Douglas's classification can be found in \cite{WillyTrans}.  A
meticulous analysis using the methods described there shows that a
regular Lagrangian must exist, even in the case 2C where we
concluded that there is no invariant Lagrangian. In more detail, our
case~1 belongs to Douglas's case~I, and our cases 2A, 2B and 2C to
his case~IIa1.

Observe that if $a=b=0$ we are back in the example of the canonical
connection. According to \cite{Thompson1}, the most general
Lagrangian for the case 2C, subject to the regularity condition, is
given by
\[
L(q_1,q_2,{\dot q}_1,{\dot q}_2) = {\dot q}^1\theta(q_1,q_2,z) +
\psi({\dot q}^1) , \qquad z={\dot q}_2/{\dot q}_1,
\]
where $\psi$ is an arbitrary function and $\theta$ is a solution of
the PDE
\[
z\theta_{zz} + z\theta_{zq_2} +\theta_{q_1z}-\theta_{q_2} =0
\]
(subscripts denote derivatives, as usual). For example, the function
\[
L(q_1,q_2,{\dot q}_1,{\dot q}_2) = \onehalf {\dot q}_1^2 +
\exp(-q_1)\frac{{\dot q}_2^2}{2{\dot q}_1}
\]
is a Lagrangian for the system in 2C. It is clearly not invariant
since
\[
\clift{\tilde E}_1(L) = \exp(-q_1)\frac{{\dot q}_2^2}{2{\dot q}_1}.
\]
In fact, there does not exist a function $\theta$ for which the
Lagrangian is invariant and regular. The relations $\clift{\tilde
E}_1(L)=0$ and $\clift{\tilde E}_2(L)=0$ imply that $\theta_{q_1} + z
\theta_z=0$ and $\theta_{q_2} =0$, respectively. By taking the
$z$-derivative of the first relation and by applying the second in
the defining relation of $\theta$, we can conclude that also
$\theta_z=0$ and $\theta_{q_1}=0$. But then $\theta$ is a constant
and the Lagrangian is clearly degenerate.

\section{Outlook}

We discuss briefly two possible extensions of the current framework.
First of all, let $M$ be a manifold with a given symmetry group $G$.
One can then set up an inverse problem for $G$-invariant Lagrangians
on $M$. In that case, it has been shown in \cite{Cendra} that the
Euler-Lagrange equations reduce to the so-called {\em
Lagrange}-Poincar\'e equations. On the other hand, the technique of
adapted frames can be easily extended to manifolds with a symmetry
group; a description of the reduced equations for arbitrary
second-order equations can be found in \cite{MikeTom}. So the
question would be when these reduced equations are of
Lagrange-Poincar\'e form.

The second extension can be situated at the level of Lie algebroids.
In this paper, we have discussed the inverse problem for Lagrangians
on a Lie algebra $\g$. The original inverse problem deals with
Lagrangians on $TM$. Both $\g$ and $TM$ are the two simplest cases
of a Lie algebroid. So it seems natural to study an inverse problem
for arbitrary Lie algebroids (the corresponding Lagrangian equations
were given in e.g.\ \cite{Ed}). The situation in the previous
paragraph then coincides with the case that the Lie algebroid is
$TM/G$, the so-called Atiyah algebroid.

\subsubsection*{Acknowledgements}

We are deeply indebted to Eduardo Mart\'inez for his essential
contribution to the discussion about the cohomology conditions.  We
also wish to thank Gerard Thompson for many helpful comments about the
inverse problem for the canonical connection, and Willy Sarlet for
going through Example 7.3 using the methods of \cite{WillyTrans}.

The first author is a Guest Professor at Ghent University:\ he is
grateful to the Department of Mathematical Physics and Astronomy at
Ghent for its hospitality.

The second author is currently a Research Fellow at The University
of Michigan through a Marie Curie Fellowship. He is grateful to the
Department of Mathematics for its hospitality. He also acknowledges
a research grant (Krediet aan Navorsers) from the Fund for
Scientific Research - Flanders (FWO-Vlaanderen), where he is an
Honorary Postdoctoral Fellow.

\end{document}